\documentclass[12pt]{article} 

\usepackage[utf8]{inputenc} 

\usepackage{geometry}
\usepackage{amsmath,amsthm,amsfonts,amssymb}
\usepackage{textcomp,enumerate,bbm,latexsym, bm,color}
\usepackage{graphicx} 
\usepackage{url,algorithm,algorithmic}
\usepackage[authoryear]{natbib}
\usepackage{multirow}
\usepackage{booktabs}
\usepackage{cleveref}
\usepackage[english]{babel}
\usepackage{subcaption}

\def\spacingset#1{\renewcommand{\baselinestretch}%
{#1}\small\normalsize} \spacingset{1}


\newcommand{\cf}{\mathcal{F}}
\newcommand{\real}{\mathbb{R}}
\newcommand{\dnorm}{\mathcal{N}}

\renewcommand{\emptyset}{\varnothing}
\renewcommand{\Pr}{\mathbb{P}}
\newcommand{\e}{\mathbb{E}}
\newcommand{\wt}{\widetilde}

\newcommand{\bsx}{\boldsymbol{x}}
\newcommand{\bsy}{\boldsymbol{y}}
\newcommand{\bsz}{\boldsymbol{z}}

\newcommand{\bsP}{\boldsymbol{P}}
\newcommand{\bsp}{\boldsymbol{p}}

\newcommand{\bstheta}{\boldsymbol{\theta}}

\newcommand{\mon}{\mathrm{mon}}




\theoremstyle{plain}
\newtheorem{thm}{Theorem}[section]
\newtheorem{lem}[thm]{Lemma}

\newtheorem{prop}[thm]{Proposition}

\theoremstyle{definition}
\newtheorem{defn}{Definition}
\newtheorem{exmp}{Example}

\newtheorem{assumption}{Assumption}

\crefname{assumption}{assumption}{assumptions}
\crefname{exmp}{example}{examples}

\theoremstyle{remark}
\newtheorem{rem}{Remark}[section]


\usepackage{fancyhdr} 
\pagestyle{fancy} 
\lhead{}\chead{}\rhead{}
\lfoot{}\cfoot{\thepage}\rfoot{}


  \title{\bf Admissibility in partial conjunction testing}
  \author{Jingshu Wang \\Stanford University \and Art B. Owen\\Stanford University}
\date{December 2016}

\pdfminorversion=4
\begin{document}

  \footnotetext{This work was supported by the US National
Science Foundation under grant DMS-1407397.}

  \maketitle

\begin{abstract}
Meta-analysis combines results from multiple studies aiming to 
increase power in finding their common effect. It would typically reject 
the null hypothesis of no effect if any one of the studies shows strong 
significance.  
The partial conjunction null hypothesis is rejected only when at least $r$ of
$n$ component hypotheses are non-null with $r=1$
corresponding to a usual meta-analysis. 
Compared with meta-analysis, it can  
encourage replicable findings across studies. 
A by-product of it when applied to different $r$ values is a 
confidence interval of $r$ quantifying the proportion of 
non-null studies. 
Benjamini and Heller (2008) provided a valid test for the partial conjunction 
null by ignoring the $r-1$ smallest p-values and applying a 
valid meta-analysis p-value to the remaining $n-r+1$ p-values.
We provide sufficient and necessary conditions of admissible 
combined p-value for the partial conjunction hypothesis 
among monotone tests. Non-monotone tests always dominate monotone tests 
but are usually too unreasonable to be used in practice. 
Based on these findings, we propose a generalized form of Benjamini and 
Heller's test which allows usage of various types of meta-analysis p-values, 
and apply our method to an example in assessing replicable 
benefit of new anticoagulants across subgroups of patients for stroke prevention.  
\end{abstract}


\section{Introduction}\label{sec:introduction}

When a null hypothesis is tested in $n$ different settings,
a meta-analysis can be used to obtain a combined p-value
based on all of the test results. It gains power as 
the combined p-value is usually more 
significant than any of the individual p-value in each setting. 
However, the combined p-value in meta-analysis is only 
valid for the global null where  
the null hypothesis is true in every setting, thus 
it is possible that the null is then rejected
largely on the basis of just one extremely significant component
hypothesis test.  Such a rejection may be undesirable as it
could arise from some irreproducible property of the setting
in which that one component test was made. 

\begin{table}\label{table:puzzles}
\caption{\label{tab:examples}
four hypothetical cases for five ordered p-values.}
\centering 
\begin{tabular}{clllll}
Case & $p_{(1)}$ & $p_{(2)}$ & $p_{(3)}$ & $p_{(4)}$ & $p_{(5)}$\\
\midrule 
A & $10^{-200}$ & $0.4$ & $0.5$ & $0.6$ & $0.7$\\
B & $10^{-10}$ & $10^{-9}$ & $10^{-8}$ & $10^{-7}$ & $10^{-6}$\\
\midrule 
C & $10^{-100}$ & $10^{-100}$ & $10^{-100}$ & $0.049$ & $0.8$\\
D & $0.048$& $0.048$& $0.048$& $0.048$ & $0.8$\\
\end{tabular}
\end{table}

Refering to \Cref{tab:examples}, cases A and B illustrate the potential problem 
of meta-analysis. 
Both a Fisher and a Stouffer meta-analysis
would find case A more significant than case B, 
while the only significant setting in case A may largely due to a technical 
or statistical bias. The random effect model in meta-analysis has been widely 
accepted for consideration of heteogeneity across studies \citep{higgins2009}. 
However, it still 
assumes that the effects across studies are similar and does not explicitly 
guarantee replication nor being robust to extreme bias.

Researchers in functional magnetic resonance imaging (fMRI)
have adopted conjunction (logical `and') testing 
\citep{pric:fris:1997, friston1999multisubject, nichols2005} in which a
hypothesis must be rejected in all $n$ settings where it is tested.
The $n$ settings may correspond to related tasks or they may
correspond to independent subjects. 
For example in \Cref{tab:examples}, 
a conjunction test would only reject case B which shows 
consistent replication. 
However, conjunction tests lose power for large $n$ as they are based on the
largest of $n$ p-values.
A compromise is to require evidence that at least $r$ out of $n$ null hypotheses
are false, for some user specified $r$. Such tests of the `partial
conjunction (PC) null hypothesis' were used in \cite{fris:penn:glas:2005}
and then studied by \cite{benjamini2008screening}.  The extremes $r=1$
and $r=n$ correspond to the usual meta-analysis tests and conjunction
testing respectively.


Partial conjunction testing 
is useful in areas beyond neuroimaging. It is the tool for replicability analysis, 
which finds effects that present in more than one studies, and has been 
applied in systematic 
review for healthcare prevention  \citep{shenhav2015} and genome-wide association 
studies \citep{heller2014}. PC test also has the potential usage in 
finding common gene regulation patterns across tissues for eQTL data 
\citep{flutre2013}. 
Besides, it can be applied to gene set enrichment analysis to avoid  
selection of gene sets whose significance depend on only one single 
gene \citep{wang2010}.


A Benjamini-Heller partial conjunction (BHPC) test works as follows. 
One sorts the observed
p-values yielding $p_{(1)} \le p_{(2)} \le \dots \le p_{(n)}$, ignores the smallest
$r-1$ of them, and then applies a 
valid p-value combination rule to the remaining $n-r+1$ p-values.
\cite{benjamini2008screening} show that BHPC tests are valid for the
partial conjunction null when the $n$ hypotheses are
independent. They also consider some dependent test conditions as well
as the consequences of using PC tests in the Benjamini-Hochberg procedure. 

Cases C and D illustrate an interesting property of the BHPC tests.
Suppose that we need to reject at least four null hypotheses to have
a meaningful finding.  Then a BHPC test finds that case D is stronger evidence
(smaller p-value) than case C, because BHPC is based only on $p_{(4)}$
and $p_{(5)}$.  In case C we are extremely confident of three rejections and 
are banking on the fourth one to be correct.
In case D by contrast, none of the four smallest p-values
is much better than borderline.  It appears to have about four times
as many ways to disappointing us.  
This comparsion between case C and D reveals  
a counter-intuitive property of the BHPC tests, that we study further.

Here we investigate the power properties of BHPC tests focussing on
admissibility. 
Under the assumption that 
the component p-values are either independent or have a positive dependency 
structure, we 
characterize the complete class of tests for monotone admissibility, 
which is a generalized form of BHPC p-values (GBHPC p-values). 
the only admissible PC tests 
among monotone tests are 
either of the BHPC form, or its generalization (GBHPC),   
which uses combined p-values constructed by taking the maximum of 
the meta-analysis p-value of each of the $n \choose {r - 1}$ subsets of $n - r + 1$ hypotheses. 
Under mild assumptions, a sufficient 
condition for the monotone admissibility of a GBHPC p-value is that 
each of the meta-analysis p-value for $n \choose {r - 1}$ subsets is 
admissible. 
GBHPC p-values are also called r-values in \cite{shenhav2015}. 

The monotonicity condition, which means that the combined p-value is a 
non-decreasing function of the individual p-values, 
is necessary for us to discuss admissibility for partial conjunction hypotheses with 
$r > 1$.
If we relax this condition, then BHPC tests
become inadmissible. Because non-monotone tests are quite 
unreasonable scientifically in most cases, this is not a strong criticism of BHPC.
We side with \cite{perlman1999emperor} in rejecting the admissibility
criterion not the test, when methods lacking face-value validity are
included in comparisons.

The admissibility properties show supreme of BHPC p-values, but 
how can we then explain its puzzling behavior alluded to for cases C and 
 D in Table~\ref{tab:examples}? An explanation is that unlike the combined p-values in 
 meta-analysis, the PC p-values measure the strength of replicability (the true 
 proportion of non-null studies) instead of the magnitude of effect size. 
 a PC p-value can be much smaller when the true number ($r_0$) 
 rather than the effect size of non-null studies is large. Compared with case D,  
 the three extreme p-values of case C in Table~\ref{tab:examples} gain us 
 much stronger evidence of a large effect size but not much more evidence 
 on $r_0 \geq 4$. Thus, the PC p-values for both case C and case D are 
 similar.

%



\Cref{sec:preliminaries} presents our notation and some background on partial conjunction tests and admissibility.
\Cref{sec:combining} proposes the GBHPC p-values and 
presents the main theorems on monotone admissible partial conjunction 
p-values. 
\Cref{sec:simulation} is a simulation study for the power comparison 
of several GBHPC p-values under various hypotheses configurations. 
Compared with BHPC p-values, GBHPC p-values have the advantage that it 
can be constructed from more sophisticated meta-analysis p-values. 
We illustrate this benefit in \Cref{sec:real} in an application of 
GBHPC p-values 
for assessing replicable 
benefit and safety concerns of new oral anticoagulants  
across subgroups of patients for stroke prevention. 
%
\Cref{sec:conclusion} has our conclusions and states some future work.

\section{Preliminaries}\label{sec:preliminaries}

\subsection{Definitions and notations}
The problem begins with $n$ null hypotheses to test,
$H_{0i}$ for $i=1,\dots,n$. 
Each $H_{0i}$ is the hypothesis to test for an individual setting/study.
The corresponding alternative 
hypotheses are $H_{1i}$.
The $i$'th hypothesis refers to a parameter $\theta_i$.
If $H_{0i}$ holds then $\theta_i\in\Theta_{0i}$, while $H_{1i}$
specifies that $\theta_i\in\Theta_{1i}$. The parameter space
for the $i$'th hypothesis is $\Theta_i=\Theta_{0i}\cup\Theta_{1i}$
and of course $\Theta_{0i}\cap\Theta_{1i}=\emptyset$. 
The parameter space of $(\theta_1, \dots, \theta_n)$ is 
$\Theta = \prod_i \Theta_i$.

To each hypothesis, there corresponds a p-value, $p_i$.
there may be a loss of information in reducing a
data set to one p-value. Yet often that loss is small
and very commonly the researchers who gathered the original
data share only their p-values for reasons that may include
privacy of their subjects.

We use $p_i$ to denote the numerical value of the p-value
for the $i$'th hypothesis.  It is the observed value of a corresponding
random variable $P_i$.  
The sorted p-values are $p_{(1)}\le p_{(2)}\le\dots\le p_{(n)}$
and $P_{(1)}\le P_{(2)} \le \dots\le P_{(n)}$ are the sorted random variables.
Probability and expectation for functions of $P_i$ are given 
by $\Pr_{\theta_i}$ and $\e_{\theta_i}$ respectively. 
We let $\theta=(\theta_1,\dots,\theta_n)$ and
$\bsP=(P_1,\dots,P_n)$. Probability and expectation for functions of $\bsP$
are given by $\Pr_{\theta}$ and $\e_{\theta}$. Each 
$p_i$ is a valid P-value according to the definition below:

\begin{defn}[Validity]\label{def:validp}
A valid component p-value satisfies $\sup_{\theta_i\in\Theta_{0i}}\Pr_{\theta_i}(P_i\le \alpha)\le \alpha$ for $0\le\alpha\le1$.
\end{defn}

Besides independency, positive dependency can be a common dependency structure across studies, especially 
when they share samples.
For  
$P_1,\dots,P_n$, we assume that 
they are either independent or positively dependent (PRDS) \citep{benjamini2001} 
Under any parameter $\theta \in \Theta$. 
Using the definition of PRDS
thus they have the following property: for $\forall (p_1, \cdots, p_n) \in [0, 1]^n$ 
and $\forall \theta \in \Theta$,
\begin{equation}\label{eq:prds}
\Pr_{\theta}(P_1 \leq p_1, \cdots, P_n \leq p_n) \geq \prod_i 
\Pr_{\theta_i}(P_i \leq p_i)
\end{equation}

For a given $r \leq n$, the PC null hypothesis and alternative hypothesis are defined 
as
\begin{align*}
H_0^{r/n}:\quad & \{\text{at most } r - 1 \text{ hypotheses are non-null}\},\quad\text{and}\\
H_1^{r/n}:\quad & \{\text{at least } r \text{ hypotheses are non-null}\}.
\end{align*}
The null space is defined as $\Theta_0^{r/n} = \{\theta\in \Theta: H_0^{r/n} \text{ is true}\}$.


We use $1{:}r$ to denote $\{1,2,\dots,r\}$ and similarly
$(r+1){:}n=\{r+1,r+2,\dots,n\}$.
The index set $u\subset 1{:}n$ has cardinality $|u|$
and complement $-u=1{:}n\setminus u$.
Under the null hypothesis $H_{0u}$ we have $\theta_j\in\Theta_{0j}$
for all $j\in u$. The null space of $H_{0u}$ is denoted as $\Theta_{0u}$.

Sometimes we combine points $\bsx\in\real^n$ and $\bsy\in\real^n$
into a point $\bsz\in\real^n$ with $z_j=x_j$ for $j\in u$ and $z_j=y_j$
for $j\not\in u$.
Such a hybrid point is denoted $\bsz=\bsx_{u}{:}\bsy_{-u}$. 
Let $u = \{i_1, i_2, \dots, i_{k}\}$, then 
$\theta_u$ is defined as 
the combination $(\theta_{i_1}, \theta_{i_2}, \dots, \theta_{i_k})$.

We can extend the definition of validity to meta-analysis and PC 
p-values.  The combination of $k$ p-values
($k$ may differ from $n$ later)
produces the combined p-value
$p_{r/k}=f_{r,k}(P_1,\dots,P_k)$ which is a valid p-value for
testing $H_0^{r/k}$ if 
$$\sup_{\theta \in \Theta_0^{r / n}} \Pr_\theta( P_{r/k}\le\alpha)\le\alpha,\quad
\forall\, 0\le\alpha\le1.
$$

\begin{defn}[Sensitivity] A sensitive p-value 
  $P_{r/k}=f_{r,k}(P_1,\dots,P_k)$ for $H_0^{r/n}$ satisfies $$\liminf_{\bsP_u \to 
  \bm{0}}P_{r/k} = 0$$
  for $\forall u \subset 1{:}k$ and $|u| = r$.
\end{defn}

Sensitivity requires that PC p-value drops to $0$ when we are certain to 
reject any of $r$ individual hypotheses. For meta-analysis p-value $P_{1/k}$, it 
means that we reject the global null when we are certain at any of 
the individual hypothesis. We think that it is a 
practically reasonable requirement for 
a p-value for testing $H_0^{r/k}$.

Here are some examples of valid and sensitive meta-analysis p-values
given valid p-values $p_1,\dots,p_k$. The combination for a method
$M$ is defined in terms of a function $f_{M,k}$ which may incorporate
sorting of its arguments.

\begin{exmp}\label{example:simes}
Simes' method:
$$p_{S,k} = f_{S,k}(p_1,\dots,p_k)\equiv\min_{i = 1, \cdots, k} \biggl\{ \frac{kp_{(i)}}{i}  \biggr\}.$$
\end{exmp}
\begin{exmp}\label{example:fisher}
Fisher's method:
$$
    p_{F,k} = f_{F,k}(p_1,\dots,p_k)\equiv \mathbbm{P} \biggl( \chi_{(2k)}^2 \geq - 2 
      \sum_{i = 1}^k \log p_{i} \biggr).
$$
\end{exmp}

  \begin{exmp}\label{exmp:wtstouf}
Weighted Stouffer test:
Consider test statistics $T_i \sim \dnorm(\sqrt{n_i}\theta_i/\sigma_i, 1)$, with
sample sizes $n_i$ for $i=1,\dots,k$ and known $\sigma_i>0$. The p-value for the null
that $\theta_i=0$ versus the alternative that $\theta_i>0$ is 
$p_i=1-\Phi(T_i)=\Phi(-T_i)$.
A weighted Stouffer p-value for $H_0^{1/k}$ takes the form
$$
p_{\mathrm{WS},k}  = p_{\mathrm{WS},k}(p_1,\dots,p_k) 
\equiv 1 - \Phi\Biggl(\frac{\sum_{i=1}^{k} \sqrt {n_{i}}\Phi^{-1}(1 - p_{i})/\sigma_i}{\sqrt{\sum_{i=1}^{k}{n_{i}}/\sigma_i^2}}\Biggr).
$$
In fact, $p_{\mathrm{WS}, k}$ can be used beyond one-sided tests. For example, 
for two-sided test $\Phi^{-1}(1 - p_i)$ also has  
the magnitude roughly proportional to $\sqrt{n_i}$. 
We shall illustrate the performance under such usage in our simulations in 
\Cref{sec:simulation}.
\end{exmp}

  \begin{exmp}\label{exmp:TPM}
Truncated product method \citep{zaykin2002}:
this is a more recently developed method to gain efficiency in the presence of 
outliers. The test statistic has the form 
\[
w_\gamma = \prod_i p_i^{1_{p_i \leq \gamma}}
\]
where $\gamma$ is some pre-determined value. 
The TPM p-value for $H_0^{1/k}$ takes the form
$$
p_{\mathrm{TPM},k}  = \Pr(W \leq w_\gamma).
$$
where the probability function of $W$ was computed for both independency and dependent 
scenarios in \citep{zaykin2002}.
\end{exmp}

For non-symmetric meta-analysis p-values, there are also 
weighted
versions of Fisher tests.
Note that each of the functions $f$ in the previous examples  
is monotone according to this definition:
\begin{defn}(Monotonicity)
the p-value $f(p_1,\dots,p_k)$ is monotone if 
the function $f$ is non-decreasing in each argument. 
The set of such monotone p-value functions is denoted $\cf_{\mon}$. 
a monotone test is one that rejects its null hypothesis for small 
values of a monotone p-value. 
\end{defn}

A non-monotone test would reject its null hypothesis 
at some input $(p_1,\dots,p_k)$ but fail to reject at 
some $(p_1',\dots,p_k')$  with all $p_i'\le p_i$. 
Such a test is typically unreasonable.

Besides monotonicity, we also clearify the definition of a 
symmetric combined p-value:
\begin{defn}(Symmetry)
The combined p-value $f(p_1,\dots,p_k)$ is symmetric 
if its value stays unchanged under any permutation of $\{p_1, \dots, p_k\}$. 
\end{defn}

Finally, we state the concept of admissibility using the definition of 
admissible tests from \citet[Chapter 6.7]{lehmann2006testing}.
Let a hypothesis test of $H_0$ versus $H_1$
described by a function $\varphi(X)\in\{0,1\}$
of the data $X$. If $\varphi(X)=1$
then $H_0$ is rejected and $\varphi(X)=0$ otherwise.
The test $\varphi$ is valid at level $\alpha$ if 
$\sup_{\theta\in\Theta_0}\e_\theta(\varphi(X))\le\alpha$.
In our context, the data are a vector $\bsP=(P_1,\dots,P_n)$
of p-values and $\varphi(P_1,\dots,P_n) = 1_{f(P_1,\dots,P_n)\le\alpha}$
where $f$ is a p-value combination function.

\begin{defn}[$\Psi$,$\alpha$-admissibility] 
The level-$\alpha$ test $\varphi\in\Psi$ is 
$\alpha$-admissible for testing $H_0:\theta \in \Theta_0$ against $H_1 :
  \theta \in \Theta_1$ if for any other level-$\alpha$ test $\varphi'\in\Psi$
  \[ \e_{\theta}(\varphi') \geq \e_{\theta}(\varphi),\quad\text{for all $\theta \in \Theta_1$}  \]
  implies $\e_{\theta}(\varphi') = \e_{\theta}(\varphi)$ for  all $\theta \in \Theta_1$. 
\end{defn}

The definition of admissibility depends on the alternatives in $\Theta_1$
as well as the space $\Psi$ of test functions.  The constraints on $\Theta_1$ are important. 
for the ordinary meta-analysis, 
\cite{birn:1954} shows that every monotone p-value is admissible 
when the component p-values 
are independent and the null hypothesis is simple, 
because there is then some alternative
at which that p-value gives optimal power.  However, those optimizing alternatives
may not all be reasonable. \cite{birnbaum1955characterizations} and \citet{stein1956admissibility} 
(generalized later by \citet{matthes1967tests} to include nuisance parameters) also showed that 
for the ordinary meta-analysis, 
when the test statistic distribution
is an exponential family with $\theta$ as canonical parameter, a necessary and sufficient 
condition for admissibility is to have a closed convex acceptance region of 
underlying test statistics.


For the space of test functions $\Psi$, 
traditionally it contains all possible functions when considering admissibility. 
However, for the partial conjunction null hypothesis, 
we restrict $\Psi$ to only include tests using monotone p-values 
to avoid 
unreasonble more powerful tests (see \Cref{sec:inadmiss} for details).

\subsection{BHPC p-values}
Now we restate Theorem 1 of \cite{benjamini2008screening}.

\begin{thm} \label{thm:BHPC_thm}
Let $P_1,\dots,P_n$ be independent valid p-values, and for $k=n-r+1$
let $f_{M,k}(P_1,\dots,P_k)$ be a valid and symmetric meta-analysis p-value 
where $f_{M,k}\in\cf_\mon$.
Then $P_{r/n} = f_{M,n -r+1}(P_{(r)}, P_{(r + 1)}, \dots, P_{(n)})$
is a valid p-value for $H_0^{r/n}$.
\end{thm}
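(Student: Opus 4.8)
The plan is to verify the validity condition $\sup_{\theta\in\Theta_0^{r/n}}\Pr_\theta(P_{r/n}\le\alpha)\le\alpha$ directly, by lower-bounding $P_{r/n}$ with an ordinary meta-analysis p-value computed on a fixed collection of $k=n-r+1$ genuinely null component p-values. Fix any $\theta\in\Theta_0^{r/n}$. By definition of the PC null at most $r-1$ hypotheses are non-null, so the set $N=N(\theta)$ of indices $i$ with $\theta_i\in\Theta_{0i}$ has $|N|\ge n-(r-1)=k$. I would then choose any subset $N'\subseteq N$ with $|N'|=k$; crucially this choice depends only on $\theta$, not on the realized data, so $N'$ is fixed in advance.

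The key step is a combinatorial comparison of order statistics. I would show that the $k$ largest order statistics $(P_{(r)},P_{(r+1)},\dots,P_{(n)})$ dominate, coordinatewise after sorting, the $k$ values $\{P_i:i\in N'\}$. Writing the sorted entries of the latter as $Q_{(1)}\le\cdots\le Q_{(k)}$, the claim is that $Q_{(j)}\le P_{(r+j-1)}$ for every $j=1,\dots,k$. This follows from a ranking count: at least $k-j+1$ elements of $N'$ are $\ge Q_{(j)}$, whereas only $k-j$ elements of the whole vector strictly exceed $P_{(r+j-1)}$ (since $n-(r+j-1)=k-j$), so $Q_{(j)}>P_{(r+j-1)}$ would force more large elements than the full vector contains.

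Granting this comparison, I would invoke symmetry and monotonicity of $f_{M,k}$. Symmetry lets me replace $f_{M,k}(\{P_i\}_{i\in N'})$ by $f_{M,k}(Q_{(1)},\dots,Q_{(k)})$, and $f_{M,k}\in\cf_\mon$ together with the coordinatewise bound yields
\[
f_{M,k}(Q_{(1)},\dots,Q_{(k)})\le f_{M,k}(P_{(r)},\dots,P_{(n)})=P_{r/n},
\]
so that $\{P_{r/n}\le\alpha\}\subseteq\{f_{M,k}(\{P_i\}_{i\in N'})\le\alpha\}$. Since the $P_i$ are independent, the p-values $\{P_i:i\in N'\}$ are independent, and because every index in $N'$ is null, $f_{M,k}(\{P_i\}_{i\in N'})$ is a valid meta-analysis p-value for their global null; hence $\Pr_\theta(f_{M,k}(\{P_i\}_{i\in N'})\le\alpha)\le\alpha$. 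Combining gives $\Pr_\theta(P_{r/n}\le\alpha)\le\alpha$, and taking the supremum over $\theta\in\Theta_0^{r/n}$ completes the argument.

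I expect the main obstacle to be the order-statistic comparison $Q_{(j)}\le P_{(r+j-1)}$: it is intuitive but must be argued carefully, with attention to ties and to the fact that $N'$ is a subset of the full index set so that its order statistics interlace with those of the whole vector. Everything after that is bookkeeping — the reduction to an all-null $k$-subset, and the successive use of validity, symmetry, and monotonicity, are each immediate once the comparison is in hand, with independence entering only to guarantee validity of the meta-analysis rule on the chosen subset.
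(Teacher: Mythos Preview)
Your proof is correct and is essentially the argument the paper uses. The paper does not give a standalone proof of this theorem (it is restated from \cite{benjamini2008screening}), but it proves the more general \Cref{prop:validity} for GBHPC p-values: pick a size-$(n-r+1)$ null subset $u$, bound $f^\star(\bsP)\ge g_u(\bsP_u)$, and invoke validity of $g_u$. Your order-statistic inequality $Q_{(j)}\le P_{(r+j-1)}$ together with symmetry and monotonicity of $f_{M,k}$ is exactly what establishes $f_{M,k}(P_{(r)},\dots,P_{(n)})=\max_{|u|=n-r+1} f_{M,k}(\bsP_u)$, i.e.\ that BHPC is the symmetric special case of GBHPC, after which your remaining steps coincide with the proof of \Cref{prop:validity}.
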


As mentioned, we call the combined p-value $P_{r / n}$ 
described in \Cref{thm:BHPC_thm} a BHPC p-value for short. 
In practice it makes sense to require that the $p$-value combination function
$f_{M,k}(\cdot)$, for $k=n-r+1$, be a sensitive one for $H_0^{1/k}$.  
Notice that if $f_{M,k}$ were a partial conjunction
test of $H_0^{s/k}$ for $s>1$, then $f_{M, k}$ is still a valid meta-analysis 
p-value but is not sensitive any more. 
The BHPC p-values satisfy the chain rule in the sense that 
 $P_{r/n}$ in Theorem~\ref{thm:BHPC_thm} now becomes 
 a valid test for $H_0^{(r+s-1)/n}$. 
Though $P_{r/n}$ would still be a valid test of $H_0^{r/n}$, it is less 
efficient. 

Based on the relationship between hypotheses testing and building 
confidence set, if we have for each $r = 1, 2, \dots, n$ a valid combined p-value 
$p_{r/n}$ for $H_0^{r/n}$, then the $1- \alpha$ confidence set for $r$ is 
\[
i = \{r: P_{r/n} \leq \alpha\}
\]
\cite{benjamini2008screening} showed that if each $P_{r/n}$ is a BHPC p-value 
with $f_{M, k} = f_{S, k}$ in \ref{example:simes}, then we have 
$p_{1/n} \leq P_{2/n} \leq\dots \leq P_{n/n}$ and 
$i = [\hat r, n]$ becomes a $1 - \alpha$ confidence interval 
with $\hat r = \max\{r: P_{r/n} \leq \alpha\}$.

\section{GBHPC p-values}\label{sec:combining}

Motivated by the BHPC p-value, we discuss a more general class of
combined p-values with good power properties. These are 
defined as GBHPC p-values:

\begin{defn} [GBHPC p-value]\label{def:GBHPC}
For each $u\subset1{:}n$ with $|u|=k=n-r+1$ let $g_u$ be a function from $[0,1]^k$ to $[0,1]$
such that $g_u$ is non-decreasing and is a 
valid meta-analysis p-value for $H_{0u}$.  Then
\begin{align}\label{eq:defgbhpc}
f^\star(\bsp) = f^\star(p_1, \cdots, p_n)= \max_{u\subset1{:}n\atop |u|=n-r+1}g_u(\bsp_u)
\end{align}
is a generalized BHPC (GBHPC) p-value.
\end{defn}

The alternative hypothesis $H_{1}^{r/n}$ that at least $r$ out of $n$ hypotheses are false is equivalent to the statement that for every $n-r+1$ of the $n$ hypotheses, there 
is at least one of them that is false. This is the reason that the GBHPC p-value 
is the maximum of all the meta-analysis p-values of size $n -r +1$. 
Using above explanation, the next proposition states validity of 
GBHPC p-values under any dependency structure of the individual p-values.

\begin{prop}\label{prop:validity}
  Any GBHPC p-value is a valid p-value for $H_0^{r/n}$. 
\end{prop}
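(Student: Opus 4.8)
The plan is to exploit the combinatorial structure of the partial conjunction null together with the maximum appearing in the definition of $f^\star$. The key observation, already flagged in the text preceding the statement, is that $H_0^{r/n}$ asserts that at most $r-1$ of the $n$ hypotheses are non-null, equivalently that at least $n-r+1$ of them are null. Hence for any $\theta\in\Theta_0^{r/n}$ there exists at least one index set $u_0\subset1{:}n$ with $|u_0|=k=n-r+1$ whose components are all null, i.e.\ $\theta\in\Theta_{0u_0}$.

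First I would fix an arbitrary $\theta\in\Theta_0^{r/n}$ together with such a witnessing set $u_0$, and record the event inclusion
$$\{f^\star(\bsP)\le\alpha\}=\Bigl\{\max_{u\subset1{:}n\atop|u|=k}g_u(\bsP_u)\le\alpha\Bigr\}\subseteq\{g_{u_0}(\bsP_{u_0})\le\alpha\},$$
which holds because if the maximum is at most $\alpha$ then in particular the single term $g_{u_0}(\bsP_{u_0})$ is at most $\alpha$. Taking $\Pr_\theta$ of both sides yields $\Pr_\theta(f^\star(\bsP)\le\alpha)\le\Pr_\theta(g_{u_0}(\bsP_{u_0})\le\alpha)$. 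I would then invoke the defining property of $g_{u_0}$: it is a valid meta-analysis p-value for $H_{0u_0}$, and since the present $\theta$ lies in $\Theta_{0u_0}$ this validity applies at $\theta$, giving $\Pr_\theta(g_{u_0}(\bsP_{u_0})\le\alpha)\le\alpha$. Chaining the two bounds gives $\Pr_\theta(f^\star(\bsP)\le\alpha)\le\alpha$, and since $\theta$ was arbitrary, taking the supremum over $\Theta_0^{r/n}$ produces $\sup_{\theta\in\Theta_0^{r/n}}\Pr_\theta(f^\star(\bsP)\le\alpha)\le\alpha$ for every $\alpha\in[0,1]$, which is exactly validity for $H_0^{r/n}$.

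There is no genuinely hard step here; the whole content is the event inclusion, and it is worth stressing why the argument needs no independence or PRDS assumption. Taking the maximum over all size-$k$ subsets is precisely what lets us discard every term except $g_{u_0}$, so only the validity of $g_{u_0}$ on the one subset where the global null truly holds is ever used; the remaining $g_u$ may behave arbitrarily because the maximum only shrinks the rejection event. The sole point requiring a word of care is that the witnessing set $u_0$ generally depends on $\theta$, but this is harmless since the resulting bound $\alpha$ is uniform over all admissible choices of $u_0$.
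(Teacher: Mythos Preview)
Your proposal is correct and follows essentially the same argument as the paper: pick a witnessing subset $u_0$ of size $n-r+1$ on which all component nulls hold, use the event inclusion $\{f^\star(\bsP)\le\alpha\}\subseteq\{g_{u_0}(\bsP_{u_0})\le\alpha\}$, and invoke validity of $g_{u_0}$. The paper's proof is slightly terser and makes explicit the reduction $\Pr_\theta(g_{u_0}(\bsP_{u_0})\le\alpha)=\Pr_{\theta_{u_0}}(g_{u_0}(\bsP_{u_0})\le\alpha)$ before applying validity, but otherwise the two arguments coincide.
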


\begin{proof}
  Consider a GBHPC p-value of the form \eqref{eq:defgbhpc}. From the definition of $H_0^{r/n}$, for all $\theta \in
  \Theta_0^{r / n}$, there exists $u$ with $|u| = n-r+1$ 
  such that $\theta_{j} \in \Theta_{0j}$ for all $j\in u$. 
then for any $\alpha \in [0, 1]$,
  \begin{align*}
     & \Pr_{\theta} (f^{\star} (\bsP) \leq  \alpha)
    \leq \Pr_{\theta}(g_u(\bsP_u)\le \alpha)=\Pr_{\theta_u}(g_u(\bsP_u)\le \alpha)\le\alpha.
  \end{align*}
    Thus $f^{\star}(P_1, \cdots, P_n)$ is valid for $H_0^{r / n}$.
\end{proof}

The BHPC p-value is a special case of GBHPC p-values. If all 
$g_u \equiv g$ in \eqref{eq:defgbhpc} and $g$ a monotone and symmetric 
combined p-value, then $f^\star(\bsp) = g(p_{(r)}, \dots, p_{(n)})$  
becomes a BHPC p-value. Some meta-analysis methods,
such as the weighted Stouffer test in \Cref{exmp:wtstouf},
treat their component p-values differently
depending on the relative sample sizes on which they are based.
The GBHPC framework includes such methods.

Next we show that a sensitive GBHPC p-value has a unique representation 
in the form of \eqref{eq:defgbhpc}.

\begin{prop}\label{prop:unique}
If a GBHPC p-value $f^\star$ is sensitive, then each $g_u$ is also 
sensitive and the representation of $f^\star$ 
in \eqref{eq:defgbhpc} is unique with 
  \begin{equation}\label{eq:unique_g}
    g_{u} (\bsP_u) = \inf_{\bsP_{-u} \in (0, 1]^{r-1}} f^\star (P_1, \cdots, P_n)
  \end{equation}
\end{prop}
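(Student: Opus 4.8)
The plan is to argue in two stages: first show that sensitivity of $f^\star$ forces each component $g_u$ to be sensitive, and then use this componentwise sensitivity to establish the infimum formula \eqref{eq:unique_g}. Uniqueness is then immediate, because the right-hand side of \eqref{eq:unique_g} is determined by $f^\star$ alone and makes no reference to the particular representation \eqref{eq:defgbhpc}; any two sensitive GBHPC representations of the same $f^\star$ must therefore share the same components.

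For the first stage, I would fix $u$ with $|u|=n-r+1$ and a single index $j\in u$, and show $\lim_{P_j\to0}g_u(\bsP_u)=0$ with the remaining arguments $\bsP_{u\setminus\{j\}}$ held at arbitrary fixed values $\bm{q}$. The key observation is that the index set $v=\{j\}\cup(-u)$ has cardinality $1+(r-1)=r$, so driving $\bsP_v\to\bm{0}$ is exactly the kind of limit covered by sensitivity of $f^\star$. Along this path $g_u(\bsP_u)$ sees only $P_j$ (the coordinates in $-u$ are not among its arguments), and by monotonicity it decreases to a limit $L(\bm{q})\ge0$. Since $f^\star\ge g_u$ pointwise, taking $\liminf$ along $\bsP_v\to\bm{0}$ gives $0=\liminf f^\star\ge L(\bm{q})\ge0$, hence $L(\bm{q})=0$. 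As this holds for every $j\in u$ and every $\bm{q}$, each $g_u$ is sensitive.

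For the formula, write $\tilde g_u(\bsP_u)=\inf_{\bsP_{-u}\in(0,1]^{r-1}}f^\star$. One inequality is free: because $f^\star\ge g_u$ and $g_u(\bsP_u)$ does not involve $\bsP_{-u}$, we get $\tilde g_u\ge g_u$. For the reverse inequality I would send $\bsP_{-u}\to\bm{0}$. Here the crucial combinatorial point is that any $v\ne u$ with $|v|=|u|$ must meet $-u$, so it contains some $\ell\in v\cap(-u)$; driving $P_\ell\to0$, together with monotonicity in the remaining coordinates of $v\cap(-u)$ and the sensitivity of $g_v$ just established, forces $g_v(\bsP_v)\to0$. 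Since there are finitely many $v$, the maximum defining $f^\star$ collapses to $g_u(\bsP_u)$ in the limit. Monotonicity of $f^\star$ in the coordinates of $-u$ then identifies this limit with the infimum, giving $\tilde g_u=g_u$ and hence \eqref{eq:unique_g}.

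The main obstacle, and the step I would treat most carefully, is the interplay between monotonicity and sensitivity when several arguments of a given $g_v$ are sent to zero simultaneously: the definition of sensitivity only guarantees $g_v\to0$ when a single argument vanishes with the others fixed, so I would use monotonicity to dominate the multi-coordinate limit by a single-coordinate one (bounding $g_v$ above by its value with the extra coordinates of $v\cap(-u)$ frozen at $1$). I would also want to pin down the intended reading of $\liminf_{\bsP_v\to\bm{0}}$, namely that it holds with the complementary coordinates fixed at arbitrary values; by monotonicity of $f^\star$ it suffices to verify the case where those coordinates equal $1$, which makes both stages robust to how the limit is interpreted.
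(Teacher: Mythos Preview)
Your proof is correct and follows essentially the same approach as the paper: for sensitivity you use the same trick of embedding a single index $j\in u$ into the size-$r$ set $\{j\}\cup(-u)$ and bounding $g_u\le f^\star$, and for the infimum formula you use the same observation that every $v\ne u$ meets $-u$ so that sensitivity of $g_v$ kills all other terms in the maximum. The paper's proof is terser (it asserts the equality $\inf_{\bsP_{-u_0}}\max_u g_u(\bsP_u)=g_{u_0}(\bsP_{u_0})$ without spelling out the combinatorial point), so the care you take with the multi-coordinate limit and with the reading of $\liminf$ only adds rigor, not a different idea.
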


\begin{proof}
Consider any given $u \in 1{:}n$ with $|u| = n -r + 1$. Then for $\forall i \in u$, let 
$u_i = \{-u, i\}$, then $|u_i| = r$. As $f^\star$ is sensitive, we have 
\[
\liminf_{P_i \to 0} g_u(\bsP_u) = \liminf_{\bsP_{u_i} \to \bm{0}} g_u(\bsP_u)
\leq \liminf_{\bsP_{u_i} \to \bm{0}} f^\star(\bsP) = 0
\]
Thus, $g_u$ is sensitive for every $u$ with $|u| = n-r+1$. 
On the other hand, for a given $u_0$ with $|u_0| = n-r+1$, 
using definition \eqref{eq:defgbhpc}, we 
have 
\[
\inf_{\bsP_{-u_0} \in (0, 1]^{r-1}} f^\star (\bsP) = 
\inf_{\bsP_{-u_0} \in (0, 1]^{r-1}} \left[\max_u g_u(\bsP_u)\right] = g_{u_0}(\bsP_{u_0})
\]
this also proves the uniqueness of the representation.
\end{proof}

\begin{rem}\label{rem:bhpc}
Equation \eqref{eq:unique_g} also shows that any symmetric GBHPC p-value 
is a BHPC p-value. If $f^\star$ is symmetric, then $g_u$ constructed in 
\eqref{eq:unique_g} is also valid, monotone and symmetric. Also, $g_u \equiv g$ is the 
same 
for all $u$. Thus 
$f^\star = g(p_{(n)}, \cdots, p_{(r)})$ is a BHPC p-value.
\end{rem}

\begin{rem}
Equation~\eqref{eq:defgbhpc} involves taking the maximum of meta-analysis p-values over 
${n \choose r-1}$ number of subsets. Thus, the computational cost 
of non-symmetric GBHPC p-values can be high for large $r$ and $n$. 
One situation to use a non-symmetric GBHPC p-value is when $r = 2$, then 
${n \choose r-1} = n$ which is typically acceptable. Besides, 
it is sometimes possible to 
make use of the special structure of the problem to construct 
easy-to-compute non-symmetric GBHPC p-values. For 
instance in \Cref{sec:real}, the GBHPC p-values 
we constructed for the pharmaceutical data has almost the same computational cost as BHPC p-values, but can give much smaller p-values.
\end{rem}

\subsection{Monotone $\alpha$-admissiblity}\label{sec:admissibility}

Now we discuss the sufficient and necessary conditions for admissible combined 
p-values of a PC hypothesis. Each of our results uses some combination
of the following three assumptions on the individual p-values.

\begin{assumption}[Strong alternatives]\label{assumption:power}
$\forall \alpha > 0$ and $i = 1, \dots, n$,
  $\sup_{\theta_i \in \Theta_{1 i}} \mathbbm{P}_{\theta_i} (P_i \leq \alpha) = 1$.
\end{assumption}

\begin{assumption}[Continuity]\label{assumption:continuity}
For $\forall \theta_i \in \Theta_{1i}$ and 
$i = 1,  \dots, n$, $\mathbbm{P}_{\theta_i} (P_i = 0)=0$.
\end{assumption}

\begin{assumption}[Completeness]\label{assumption:complete}
 The family $\{\mathbbm{P}_{\theta_{u}}: \theta_u \notin \Theta_{0u}\}$  
 for any subset $u$ with $|u| = n -r +1$ is complete.
\end{assumption}

\Cref{assumption:power} states that for each individual hypothesis 
there are strong enough alternatives that we can almost certainly reject the null. 
\Cref{assumption:continuity} is a technical assumption 
assuming that the probability that the p-value is exactly $0$ is zero  
under any alternative. The completeness in 
\Cref{assumption:complete} is to guarantee that 
if two level $\alpha$ meta-analysis tests for $H_{0u}$ 
has the same power at every point in the 
alternative space then they are the same test. 
 Roughly speaking, both \Cref{assumption:power,assumption:complete}
 require that the 
 alternative space of each individual hypothesis is large enough to include 
 various possibilities. The three assumptions can be satisfied in common tests.
 
 For example, tests satisfying \Cref{assumption:power}  
 include testing the parameters of exponential families and location families. 
 \citet[Theorem 4.3.1]{lehmann2006testing} show that completeness is satisfied
 for testing the natural parameter of a $k$-dimensional
  exponential family if the alternative space $\Theta_{1 i}$ contains a
  $k$-dimensional rectangle. If the individual p-values are independent, 
  then completeness of the alternative space for each individual hypothesis 
  implies \Cref{assumption:complete}.
  Thus, we believe that 
\cref{assumption:power,assumption:complete,assumption:continuity}
  can cover a large class of problems.  

Theorem~\ref{thm:necessity} shows that GBHPC p-values form a complete class of  
monotone $\alpha$-admissible
p-values for $H_0^{r/n}$. 
Theorem~\ref{thm:sufficiency} states that a sufficient  
condition for a sensitive GBHPC p-value to be monotone $\alpha$-admissible is 
that each $g_u$ is admissible for $H_{0u}$.

\begin{thm}\label{thm:necessity}
Let $P_1,\dots,P_n$ be independent or positively dependent (PRDS) 
P-values satisfying
\cref{assumption:power,assumption:continuity}.
Let $P_{r/n}$ be a valid monotone p-value for $H_0^{r/n}$.
Then there exists a valid GBHPC p-value
$p_{r/n}^\star$ 
    that is uniformly at least as powerful as $P_{r/n}$. 
\end{thm}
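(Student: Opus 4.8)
Write $P_{r/n}=f(P_1,\dots,P_n)$ with $f\in\cf_\mon$ valid for $H_0^{r/n}$. The plan is to build the dominating p-value directly from the recovery formula of \Cref{prop:unique}: for each $u\subset1{:}n$ with $|u|=n-r+1$ set
\[
g_u(\bsp_u)=\inf_{\bsp_{-u}\in(0,1]^{r-1}}f(\bsp_u{:}\bsp_{-u}),
\]
and put $f^\star=\max_{|u|=n-r+1}g_u$. Because $f$ is non-decreasing, $g_u$ is non-decreasing, depends only on $\bsp_u$, and equals $\lim_{\delta\downarrow0}f(\bsp_u{:}\bm\delta)$ where $\bm\delta=(\delta,\dots,\delta)\in(0,1]^{r-1}$. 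To finish I would verify two things: (i) each $g_u$ is a valid meta-analysis p-value for $H_{0u}$, so that $f^\star$ is a genuine GBHPC p-value and hence valid for $H_0^{r/n}$ by \Cref{prop:validity}; and (ii) $f^\star\le f$ pointwise, so that $f^\star$ rejects whenever $f$ does and is therefore at least as powerful at every alternative.

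The core of the argument is (i). Fix $u$, a null configuration $\theta_u$, a level $\alpha$, and $\delta>0$. Using \Cref{assumption:power}, I would choose $\theta^{(m)}_{-u}$ so that each coordinate $j\in-u$ satisfies $\Pr_{\theta^{(m)}_j}(P_j>\delta)\to0$, and set $\theta^{(m)}=(\theta_u,\theta^{(m)}_{-u})$. Since $|{-u}|=r-1$, at most $r-1$ components are non-null, so $\theta^{(m)}\in\Theta_0^{r/n}$ and validity of $f$ gives $\Pr_{\theta^{(m)}}(f(\bsP)\le\alpha)\le\alpha$. On $\{\bsP_{-u}\le\bm\delta\}$ monotonicity yields $f(\bsP)\le f(\bsP_u{:}\bm\delta)$, whence
\[
\Pr_{\theta^{(m)}}(f(\bsP)\le\alpha)\ge\Pr_{\theta^{(m)}}\bigl(f(\bsP_u{:}\bm\delta)\le\alpha\bigr)-\Pr_{\theta^{(m)}}(\bsP_{-u}\not\le\bm\delta).
\]
The first term on the right depends only on the marginal law of $\bsP_u$, hence only on $\theta_u$, and the second tends to $0$ by the choice of $\theta^{(m)}_{-u}$. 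Letting $m\to\infty$ gives $\Pr_{\theta_u}(f(\bsP_u{:}\bm\delta)\le\alpha)\le\alpha$ for every $\delta$; letting $\delta\downarrow0$ along the increasing events and then $\alpha'\downarrow\alpha$ upgrades this to $\Pr_{\theta_u}(g_u(\bsP_u)\le\alpha)\le\alpha$ at all levels, so $g_u$ is valid for $H_{0u}$. The dependence among the $P_i$ enters only through the marginal law of $\bsP_u$, since the two events above are separated by a union bound rather than by factorization.

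Part (ii) is then immediate: whenever $\bsp$ has strictly positive coordinates, $g_u(\bsp_u)\le f(\bsp_u{:}\bsp_{-u})=f(\bsp)$ for each $u$, so $f^\star(\bsp)\le f(\bsp)$; and \Cref{assumption:continuity} together with validity of the components forces $\Pr(P_j=0)=0$, so $f^\star(\bsP)\le f(\bsP)$ almost surely and $\{f\le\alpha\}\subseteq\{f^\star\le\alpha\}$ up to a null set. Hence $\e_\theta(1_{f^\star\le\alpha})\ge\e_\theta(1_{f\le\alpha})$ for every $\theta\in\Theta\setminus\Theta_0^{r/n}$, which is the claimed domination, with $p_{r/n}^\star=f^\star(\bsP)$. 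I expect the main obstacle to be part (i): making the passage $\bsP_{-u}\to\bm0$ rigorous while preserving the level-$\alpha$ bound, in particular exchanging the infimum defining $g_u$ with the probability and handling the boundary $\{g_u=\alpha\}$ through the $\alpha'\downarrow\alpha$ limit. The remaining steps are bookkeeping on monotone (lower) sets.
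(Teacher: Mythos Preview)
Your proposal is correct and follows essentially the same route as the paper: construct $g_u$ via the infimum formula, prove each $g_u$ is a valid meta-analysis p-value by using \Cref{assumption:power} to drive $\bsP_{-u}$ toward zero (this is the paper's \Cref{lem:construct}), and then invoke \Cref{prop:validity} together with $f^\star\le f$ on $(0,1]^n$ and \Cref{assumption:continuity} for the domination. The only cosmetic difference is that the paper proves validity of $g_u$ by contradiction and uses the PRDS product inequality, whereas you argue directly and replace that step with a union bound on $\{\bsP_{-u}\not\le\bm\delta\}$; both variants rely on the paper's implicit convention that the law of $\bsP_u$ is determined by $\theta_u$.
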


\begin{thm}\label{thm:sufficiency}
Let $P_1,\dots,P_n$ be PRDS 
P-values satisfying 
\cref{assumption:power,assumption:continuity,assumption:complete}.
For a sensitive GBHPC p-value $P_{r/n}^\star=f^\star(\bsP)$ of the form~\eqref{eq:defgbhpc}, a sufficient condition for $P_{r/n}^\star$ to 
be monotone $\alpha$-admissible is that each $g_u$ 
is an admissible meta-analysis p-value for $H_{0u}$.
\end{thm}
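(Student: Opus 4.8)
The plan is to show that $P_{r/n}^\star=f^\star(\bsP)$ admits no strictly more powerful monotone competitor, by first reducing to GBHPC competitors and then localizing the comparison to each $(n-r+1)$-subset. Write $\varphi=1_{f^\star\le\alpha}$ and suppose $\varphi'\in\Psi$ is monotone with $\e_\theta(\varphi')\ge\e_\theta(\varphi)$ for all $\theta\in\Theta_1^{r/n}$. By \Cref{thm:necessity} there is a valid GBHPC p-value $\tilde f=\max_{|u|=n-r+1}\tilde g_u(\bsP_u)$, with test $\tilde\varphi=1_{\tilde f\le\alpha}$, satisfying $\e_\theta(\tilde\varphi)\ge\e_\theta(\varphi')\ge\e_\theta(\varphi)$ on $\Theta_1^{r/n}$. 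It therefore suffices to prove $\e_\theta(\tilde\varphi)=\e_\theta(\varphi)$ for every $\theta\in\Theta_1^{r/n}$, since the intermediate inequalities then collapse and force $\e_\theta(\varphi')=\e_\theta(\varphi)$, which is exactly monotone $\alpha$-admissibility.

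The heart of the argument is to read off the embedded meta-analysis test attached to each subset. Fix $u_0$ with $|u_0|=n-r+1$ and a parameter $\theta_{u_0}\notin\Theta_{0u_0}$, i.e.\ with at least one non-null coordinate in $u_0$. For the $r-1$ indices $j\notin u_0$, \Cref{assumption:power} supplies alternatives $\theta_j^{(m)}\in\Theta_{1j}$ with $\Pr_{\theta_j^{(m)}}(P_j\le\epsilon)\to1$ for every $\epsilon>0$, and $\theta^{(m)}=\theta_{u_0}{:}\theta_{-u_0}^{(m)}$ lies in $\Theta_1^{r/n}$ because the $r-1$ non-nulls outside $u_0$ together with the non-null inside $u_0$ give at least $r$ non-nulls. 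Since $f^\star$ is sensitive, \Cref{prop:unique} makes each $g_u$ sensitive; as $\bsP_{-u_0}\to\bm{0}$ any $u\ne u_0$ contains an index of $-u_0$, so $g_u(\bsP_u)\to0$ and, by the infimum formula \eqref{eq:unique_g}, $f^\star(\bsP)\downarrow g_{u_0}(\bsP_{u_0})$ pointwise. The $\bsP_{u_0}$-marginal stays equal to $\Pr_{\theta_{u_0}}$ while the remaining coordinates degenerate at $\bm{0}$, so the dependence washes out and, using $f^\star\ge g_{u_0}$ for the upper bound and monotone convergence for the lower bound,
$$\Pr_{\theta_{u_0}}\bigl(g_{u_0}(\bsP_{u_0})<\alpha\bigr)\;\le\;\liminf_{m\to\infty}\e_{\theta^{(m)}}(\varphi)\;\le\;\limsup_{m\to\infty}\e_{\theta^{(m)}}(\varphi)\;\le\;\Pr_{\theta_{u_0}}\bigl(g_{u_0}(\bsP_{u_0})\le\alpha\bigr).$$
For the competitor, $\tilde f\ge\tilde g_{u_0}$ gives $\e_{\theta^{(m)}}(\tilde\varphi)\le\Pr_{\theta_{u_0}}(\tilde g_{u_0}(\bsP_{u_0})\le\alpha)$ for every $m$ (the $\bsP_{u_0}$-marginal does not depend on $m$); chaining with $\e_{\theta^{(m)}}(\tilde\varphi)\ge\e_{\theta^{(m)}}(\varphi)$ and letting $m\to\infty$ yields $\Pr_{\theta_{u_0}}(\tilde g_{u_0}\le\alpha)\ge\Pr_{\theta_{u_0}}(g_{u_0}\le\alpha)$ for every $\theta_{u_0}\notin\Theta_{0u_0}$, once the boundary $\{g_{u_0}=\alpha\}$ is controlled by \Cref{assumption:continuity}.

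This last inequality says the valid monotone meta-analysis test $1_{\tilde g_{u_0}\le\alpha}$ is at least as powerful as $1_{g_{u_0}\le\alpha}$ throughout the alternative of $H_{0u_0}$. Since $g_{u_0}$ is admissible by hypothesis, the two have equal power at every $\theta_{u_0}\notin\Theta_{0u_0}$, and \Cref{assumption:complete} (completeness of $\{\Pr_{\theta_{u_0}}:\theta_{u_0}\notin\Theta_{0u_0}\}$) upgrades equal power to $1_{\tilde g_{u_0}\le\alpha}=1_{g_{u_0}\le\alpha}$ almost everywhere. Ranging over all $u_0$ and writing $\{f^\star\le\alpha\}=\bigcap_u\{g_u\le\alpha\}$ and $\{\tilde f\le\alpha\}=\bigcap_u\{\tilde g_u\le\alpha\}$, the two rejection regions differ only inside the finite union $\bigcup_u(\{g_u\le\alpha\}\triangle\{\tilde g_u\le\alpha\})$ of component-null sets. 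The decisive combinatorial fact is that for any $\theta\in\Theta_1^{r/n}$ every $(n-r+1)$-subset $u$ must contain a non-null coordinate, so $\theta_u\notin\Theta_{0u}$ and the $\bsP_u$-marginal of $\Pr_\theta$ is dominated by the completeness family; hence each such null set stays null and $\e_\theta(\tilde\varphi)=\e_\theta(\varphi)$ for all $\theta\in\Theta_1^{r/n}$, completing the reduction.

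I expect the main obstacle to be the limiting step of the second paragraph: rigorously justifying that driving the $r-1$ outside p-values to certain rejection makes the GBHPC power converge to the embedded $g_{u_0}$-power, with the PRDS dependence disappearing in the degenerate limit and no mass accumulating exactly at the threshold $\alpha$. This is precisely where \Cref{assumption:power} and \Cref{assumption:continuity}, together with the monotone convergence $f^\star\downarrow g_{u_0}$ from \Cref{prop:unique}, must be combined carefully; the gap between $\Pr_{\theta_{u_0}}(g_{u_0}<\alpha)$ and $\Pr_{\theta_{u_0}}(g_{u_0}\le\alpha)$ is the delicate point that continuity is meant to close.
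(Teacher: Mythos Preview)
Your plan is essentially the paper's: reduce to a GBHPC competitor $\tilde f=\max_u\tilde g_u$ via \Cref{thm:necessity}, show $\Pr_{\theta_u}(\tilde g_u\le\alpha)\ge\Pr_{\theta_u}(g_u\le\alpha)$ for every $u$ with $|u|=n-r+1$ and every $\theta_u\notin\Theta_{0u}$, apply admissibility of $g_u$ for equal power and \Cref{assumption:complete} for $\tilde g_u=g_u$ a.e., and conclude equality of the two tests on $\Theta_1^{r/n}$. The only substantive difference is in how the key inequality is obtained. The paper does not take a sequence $\theta^{(m)}$ and pass to a limit; it argues by contradiction in the template of \Cref{lem:construct}: if the inequality failed at some $u$ and $\theta_u$, one fixes $\epsilon>0$ and $\tilde p\in(0,1]$ with $\Pr_{\theta_u}\bigl(f^\star(\bsP_u{:}\bsp_{-u})\le\alpha\bigr)>\Pr_{\theta_u}(\tilde g_u\le\alpha)+\epsilon$ for all $\bsp_{-u}\in(0,\tilde p]^{r-1}$, and then \Cref{assumption:power} together with the PRDS conditioning from that lemma produces a concrete $\theta^\star\in\Theta_1^{r/n}$ at which $\tilde f$ has strictly less power than $f^\star$, contradicting dominance.

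Two places where your write-up should be tightened. First, \Cref{assumption:continuity} asserts only $\Pr_{\theta_i}(P_i=0)=0$ and says nothing about the level set $\{g_{u_0}=\alpha\}$, so it will not by itself close the gap between $\Pr_{\theta_{u_0}}(g_{u_0}<\alpha)$ and $\Pr_{\theta_{u_0}}(g_{u_0}\le\alpha)$ that your limiting argument produces; in the paper this assumption is used only to guarantee $\bsP\in(0,1]^n$ a.s.\ so that the representation \eqref{eq:unique_g} applies. Second, the claim that under PRDS the dependence ``washes out'' as $\bsP_{-u_0}$ degenerates is loose; the paper handles PRDS explicitly by conditioning on $\bsP_{-u}$ and using that $\{f^\star\le\alpha\}$ is a decreasing set, exactly as in \Cref{lem:construct}. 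Recasting your localization step in that contradiction template removes both issues without changing the overall structure of your argument.
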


We introduce the following lemma, which is the key reason 
that \Cref{thm:necessity} and \Cref{thm:sufficiency} hold. 
It shows that given a valid monotone p-value that is not
of the GBHPC form, we can expand its
rejection region while retaining its validity.

\begin{lem}\label{lem:construct}
Let $P_1,\dots,P_n$ be PRDS P-values satisfying
\cref{assumption:power}. 
Let $f(P_1, \cdots, P_n)$ be a valid monotone p-value for
  $H^{r / n}_0$ and for $u\subset1{:}n$ with $|u|=n-r+1$, 
define
  \begin{equation}\label{eq:define_g}
    g_{u} (\bsP_u) = \inf_{\bsP_{-u} \in (0, 1]^{r-1}} f (P_1, \cdots, P_n)
  \end{equation}
   Then $g_u$ is a valid monotone meta-analysis p-value for $H_{0u}$.
\end{lem}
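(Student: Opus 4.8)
The plan is to establish the two defining properties of $g_u$ — monotonicity and validity for $H_{0u}$ — separately, with validity carrying essentially all the weight. Monotonicity is quick: if $\bsp_u \le \bsp_u'$ coordinatewise, then monotonicity of $f$ gives $f(\bsp_u{:}\bsP_{-u}) \le f(\bsp_u'{:}\bsP_{-u})$ for every $\bsP_{-u}$, and taking infima over $\bsP_{-u}\in(0,1]^{r-1}$ on both sides preserves the inequality, so $g_u(\bsp_u)\le g_u(\bsp_u')$ and $g_u\in\cf_\mon$.

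For validity I would first exploit a counting observation: since $|u|=n-r+1$, the complement has $|-u|=r-1$ coordinates, so for any $\theta_u\in\Theta_{0u}$ and \emph{any} $\theta_{-u}$ the combined parameter $\theta=(\theta_u{:}\theta_{-u})$ has at most $r-1$ non-null coordinates and hence lies in $\Theta_0^{r/n}$. This frees me to choose the coordinates in $-u$ at will and still invoke the validity of $f$. I would then approximate the infimum defining $g_u$ from above along the diagonal by setting $g_u^{(m)}(\bsP_u)=f\bigl(\bsP_u{:}(1/m,\dots,1/m)\bigr)$; monotonicity of $f$ in each argument makes $g_u^{(m)}(\bsP_u)\downarrow g_u(\bsP_u)$ as $m\to\infty$.

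The heart of the argument uses \Cref{assumption:power}. For each $j\in-u$, Strong Alternatives lets me select $\theta_j^{(m)}\in\Theta_{1j}$ making $\Pr_{\theta_j^{(m)}}(P_j\le 1/m)$ arbitrarily close to $1$; a Bonferroni bound then gives the event $E_m=\{P_j\le 1/m\text{ for all }j\in-u\}$ probability at least $1-\eta_m$ with $\eta_m\to0$. On $E_m$ monotonicity yields $f(\bsP)\le g_u^{(m)}(\bsP_u)$, so $\{g_u^{(m)}(\bsP_u)\le\alpha\}\cap E_m\subseteq\{f(\bsP)\le\alpha\}$. Since $g_u^{(m)}$ is a function of $\bsP_u$ alone, whose law depends only on $\theta_u$, and since $\theta^{(m)}=(\theta_u{:}\theta_{-u}^{(m)})\in\Theta_0^{r/n}$, validity of $f$ gives $\Pr_{\theta_u}(g_u^{(m)}(\bsP_u)\le\alpha)\le\Pr_{\theta^{(m)}}(f(\bsP)\le\alpha)+\eta_m\le\alpha+\eta_m$. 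Letting $m\to\infty$ and using $g_u^{(m)}\downarrow g_u$ delivers $\Pr_{\theta_u}(g_u(\bsP_u)<\alpha)\le\alpha$ for every $\alpha$, and then $\Pr_{\theta_u}(g_u(\bsP_u)\le\alpha)\le\alpha$ by letting the threshold decrease to $\alpha$.

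The hard part will be this final limiting step: I must transfer the uniform bound on the approximants $g_u^{(m)}$ back to $g_u$ itself, which forces me to track the boundary between the strict and non-strict inequalities at the threshold $\alpha$ (the events $\{g_u^{(m)}\le\alpha\}$ increase to a set sandwiched between $\{g_u<\alpha\}$ and $\{g_u\le\alpha\}$). A secondary subtlety is the implicit requirement — also used in the proof of \Cref{prop:validity} — that the law of the subvector $\bsP_u$ be governed by $\theta_u$ alone, which is what licenses replacing $\Pr_{\theta^{(m)}}$ by $\Pr_{\theta_u}$ for the $\bsP_u$-measurable event; beyond this, the dependence structure plays no active role, since only the validity of $f$ and a union bound are used, so the argument runs identically in the independent and PRDS cases.
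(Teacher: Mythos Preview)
Your proof is correct and shares the paper's central idea: use \Cref{assumption:power} to force $\bsP_{-u}$ near zero while keeping $\theta_u\in\Theta_{0u}$, so that the validity of $f$ at the resulting $\theta\in\Theta_0^{r/n}$ transfers to $g_u$. The execution, however, differs. The paper argues by contradiction: it assumes $\Pr_{\theta_u^\star}(g_u\le\alpha)>\alpha+\epsilon$, extracts a threshold $\wt p$ so that $\Pr_{\theta_u^\star}(f(\bsP_u{:}\bsp_{-u})\le\alpha)>\alpha+\epsilon$ for all $\bsp_{-u}\in[0,\wt p]^{r-1}$, and then invokes the PRDS property twice---once to pass to a conditional probability given $\{\bsP_{-u}\in[0,\wt p]^{r-1}\}$, and once via inequality~\eqref{eq:prds} to lower-bound the joint probability---to contradict the validity of $f$. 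You instead run a direct monotone-limit argument along the diagonal approximants $g_u^{(m)}$ and use only a union bound, carefully handling the strict/non-strict threshold at the end.

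What your route buys is that the PRDS structure is never invoked explicitly; the argument works under any dependence once you grant the implicit assumption---which you correctly flag, and which the paper also relies on in both \Cref{prop:validity} and its own proof of the lemma---that the marginal law of a subvector $\bsP_u$ (respectively $P_j$) is determined by $\theta_u$ (respectively $\theta_j$). The paper's version, on the other hand, avoids the limiting step entirely by working with a fixed $\epsilon>0$ from the start, so there is no boundary issue between $\{g_u<\alpha\}$ and $\{g_u\le\alpha\}$ to track. Both are valid; yours is arguably cleaner on the dependence side, theirs slightly shorter on the analysis side.
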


\begin{proof}
Monotonicity of $f$ implies monotonicity and measurability of  $g_{u}$.
Next, suppose that $g_u$ is not valid for $H_{0u}$. Then there is an
$\alpha\in[0,1]$ and a $\theta_u^\star$ with $\theta_j\in\Theta_{0j}$ for all $j\in u$
such that 
$\Pr_{\theta_u^\star}( g_u(\bsP_u)\le\alpha) = \Pr_{\theta_u^\star}\big(\inf_{\bsP_{-u} \in (0, 1]^{r-1}} f (\bsP) \le \alpha
\big)>\alpha+\epsilon$ for some $\epsilon>0$.
From the monotonity of $f$, there is some fixed $\wt p\in(0,1]$ with
$\Pr_{\theta_u^\star}(f(\bsP_u{:}{\bsp}_{-u})\le\alpha)>\alpha+\epsilon$ for any 
$\bsp_{-u} \in [0, \wt p]^{r-1}$. Since the p-values are PRDS and 
$\{\bsp: f(\bsp_u, \bsp_{-u}) \leq \alpha\}$ 
is a decreasing set for any fixed $\bsp_{-u}$, we have  
\[
\Pr_\theta\left(f(\bsP)\le\alpha\mid \bsP_{-u} \in [0, \wt p]^{r-1}\right) 
\geq \alpha + \epsilon/2
\]
for any $\theta$ satisfying $\theta_u = \theta_u^\star$. 
 Using  \Cref{assumption:power}, there also exists
$\theta_{-u}^\star$ with $\theta_j^\star\in\Theta_{1j}$ for $\forall j \in -u$ such that
$\Pr_{\theta_j^\star}(P_j \leq \wt p)\geq 
\big((\alpha + \epsilon/2)/(\alpha + \epsilon)\big)^{1/(r-1)}$. Since 
the p-values are PRDS, using \eqref{eq:prds} we have 
$$\Pr_{(\theta_u^\star{:}\theta^\star_{-u})}\big(f(\bsP)\leq \alpha\big) \geq
\Pr_{(\theta_u^\star{:}\theta^\star_{-u})}
\big(f(\bsP)\leq \alpha, P_{j}\leq\wt p, \forall j\in -u\Big)> \alpha + \epsilon/2$$
contradicting the validity of $f(\bsP)$.
\end{proof}

Now we are ready to prove Theorems~\ref{thm:necessity} and~\ref{thm:sufficiency}.

\begin{proof}[\bf Proof of \Cref{thm:necessity}]
Let $g_u(\bsP_u)$ be defined in \eqref{eq:define_g}. Then $P_{r/n} \ge P_{r/n}^\star$ when
$\bsP \in (0, 1]^n$. Using \Cref{assumption:continuity}, $P_{r/n}^\star$ is then uniformly 
at least as powerful as $P_{r/n}$.
It then follows directly from Lemmas~\ref{prop:validity} and~\ref{lem:construct} that 
$p_{r/n}^\star$ is a valid GBHPC p-value. 
\end{proof}

Using Lemma~\ref{prop:validity}, to prove \Cref{thm:sufficiency}, 
we only need to prove the monotone $\alpha$-admissibility of $P_{r/n}^\star$.

\begin{proof}[\bf Proof of \Cref{thm:sufficiency}]
To prove the monotone $\alpha$-admissibility
of $f^{\star}(P_1, \cdots, P_n)$, suppose that there is a valid monotone test $f^{\star\star}$ satisfying 
$\Pr_\theta( f^{\star\star}(P)\le\alpha)\ge \Pr_\theta( f^{\star}(P)\le\alpha)$
for all $\theta\in\Theta_1^{r/n}$.
By \Cref{thm:necessity} we can assume that $f^{\star\star}$ is a GBHPC p-value:
$$f^{\star \star} (\bsP) = \max_{u\subset1{:}n\atop|u|=n-r+1} g_{u}' (\bsP_u),$$
where $g'_u$ is a valid monotone meta-analysis p-value. Notice that 
since $f^\star$ is sensitive, equation \eqref{eq:unique_g} holds. 
We now show that for each $u\subset1{:}n$ with $|u|=n-r+1$, and any $\theta_{u}\not\in\Theta_{0u}$,
    \begin{equation}\label{eq:g_adm}
      \Pr_{\theta_{u}} \Big(\inf_{\bsP_{-u} \in (0, 1]^{r-1}}f^\star(\bsP) \leq \alpha\Big) \leq 
	  \Pr_{\theta_{u}} (g_{u}'(\bsP_u) \leq \alpha) \equiv\beta'
    \end{equation}
 using a similar strategy as in the proof of \Cref{lem:construct}. 
	If \eqref{eq:g_adm} does not hold for some set  $u$
and a corresponding $\theta_{u}$, then
there exist some $\epsilon > 0$ and $\wt p \in (0, 1]$ such that
$\Pr_{\theta_{u}} (f^{\star} (\bsP_{u}{:}\bsp_{-u}) \leq \alpha) > \beta' + \epsilon$
 for any $\bsp_{-u}\in(0, \wt p]^{r-1}$. Using \Cref{assumption:power}, there exists $\theta^\star$ with
 $\theta_j^\star \in \Theta_{1j}$ for $j \in -u$ such that     
 $ \Pr_{\theta_j^\star} (P_j \leq \wt p) \geq \bigl( (\beta' +
       \epsilon / 2)/(\beta' + \epsilon)\bigr)^{1/(r - 1)} $. 
    Thus,
	\begin{align*}
         \Pr_{(\theta_u{:}\theta^{\star}_{-u})} (f^{\star} (\bsP) \leq \alpha)
      &\geq  \Pr_{(\theta_u{:}\theta^{\star}_{-u})} (f^{\star} (\bsP)
      \leq \alpha, P_j \leq \wt p, \forall j \in -u)
       >   \beta' + \epsilon/2\\
      &>  \Pr_{\theta_u} (g_u'(\bsP_u) \leq \alpha)
       \geq  \Pr_{(\theta_u{:}\theta^{\star}_{-u})} (f^{\star \star} (\bsP) \leq \alpha)
    \end{align*}
    which violates the assumption that $f^{\star\star}$ is uniformly at least as powerful as $f^\star$. Thus, \eqref{eq:g_adm} holds.
 equation \eqref{eq:unique_g} and \eqref{eq:g_adm} implies that
 $\Pr_{\theta_{u}} (g_{u}' \leq \alpha) \geq  \Pr_{\theta_{u}} (g_{u}(\bsP_u) \leq \alpha)$ for any 
 $\theta_u \not\in\Theta_{0u}$ and any $\alpha \in [0, 1]$. 
    As $g_{u}(\bsP_u)$ is ${\alpha}$-admissible for $H_{0u}$, we have
    $ \Pr_{\theta_{u}} (g_{u}' (\bsP_u) \leq \alpha) = 
    \Pr_{\theta_{u}} (g_{u}(\bsP_u) \leq \alpha) $.
    Further, using \Cref{assumption:complete} we have 
    $ g_{u}' (\bsP_u) = g_{u}(\bsP_u)\ \mathrm{a.e.}$.
    Thus, for all $\theta \in
       \Theta_1^{r/n}$,
    $\Pr_{\theta} (f^{\star \star} (\bsP) \leq
       \alpha) = \Pr_{\theta} (f^{\star} (\bsP) \leq
       \alpha)$
    which shows that $f^{\star}$ is monotone ${\alpha}$-admissible for
    $H^{r / n}_0$.
\end{proof}

\begin{rem}
As mentioned in \Cref{rem:bhpc}, 
the BHPC p-values are symmetric GBHPC p-values 
As a consequence, the BHPC p-values characterize the form of 
symmetric monotone admissible combined p-values.
\end{rem}
%

Combining \Cref{thm:sufficiency} with results of 
\cite{birnbaum1955characterizations} and
\citet[Theorem 6.7.1]{lehmann2006testing} 
who characterized admissible tests for the global null
in exponential families, 
we can  give more specific conditions 
when \Cref{thm:sufficiency} is applied to exponential families. 
here is the result for a simple senario.

\begin{exmp}
  Suppose that independent test statistics $T_i$ for $i=1,\dots,n$
  are available on hypotheses $H_{0i}:\bstheta_i = \bm{a}_i \in \mathbb{R}^{k_i}$ 
  against $H_{0i}:\bstheta_i \in \mathbb{R}^{k_i}/\{\bm{a}_i\}$. 
  Here we assume that every $T_i$ is the sufficient statistics 
  for an exponential family with 
  natural parameter $\bstheta_i$.  
  For a sensitive GBHPC p-value $f^\star(\bsp)$,  
  suppose that $\Pr_{\bstheta_{0u}}(g_u(\bsP_u) \leq \alpha) = \alpha$.
  Also, for $\forall \alpha \in [0, 1]$ the 
  set of $\bm T_u$ for which $g_u(\bsp_u)>\alpha$ (the acceptance region) 
  is a closed and convex set, except for a subset of measure $0$.
  Then $f^\star(\bsp)$ is 
  monotone $\alpha$-admissible for $H_0^{r/n}$.
\end{exmp}


Related work on convexity and admissibility also appears in
\cite{matthes1967tests} for testing parameters of exponential families with presence of nuisance parameters, 
\cite{marden1982} and \cite{brown1989} for generalization to distribution families beyond exponential families, and
\cite{owen2009} for tests powerful against alternatives with concordant signs. 
Notice that the $n$-dimensional set of test statistics $\bm T$ 
itself for which $f^\star(\bsp) > \alpha$ is not convex. For partial conjunctions, the null hypothesis for the 
parameter usually includes all of the coordinate axes and the smallest convex set containing
the axes is all of Euclidean space. As a result convexity of the acceptance region is not appropriate
to partial conjunction testing.


%

\subsection{Inadmissibility}\label{sec:inadmiss}

In \Cref{sec:admissibility}  
we  constructed monotone $\alpha$-admissible p-values for $H^{r/n}_0$, we 
we show that they  
fail to be admissible if we allow non-monotone tests. For the case $n = r = 2$, the construction of such counter-examples dates back to
\cite{lehmann1952} and \cite{iwasa1991}.

Here we demonstrate that if we don't require monotone tests then
a BPHC test is inadmissible.
  Let $n = r = 2$.
  If both $P_1$ and $P_2$ are $\alpha$-admissible,
  then using \Cref{thm:necessity} and \Cref{thm:sufficiency}, the constructed combined
  p-value is just $P_{(2)}$, which is monotone admissible. At a given $\alpha$,
  the critical function is
  $ \varphi = \mathbbm{1}_{p_{(2)} \leq \alpha }(p_1, p_2)$.

  Now we can easily construct a more powerful $\alpha$-level test, by adding to
  the original rejection region a square around the top-right corner in the p-value 
  space (Solid shaded regions in \Cref{figure:counter_example}). Define the set
      \[ S = 
\begin{cases}
          \{ (p_1, p_2) \mid p_{(1)} \geq 1 - \alpha \},  & \text{ if } \alpha < \frac{1}{2}\\
         \{ (p_1, p_2) \mid p_{(1)} \geq \alpha \},        & \text{ if } \alpha \geq \frac{1}{2}.
\end{cases}
\]
     Then the test $\varphi'$ with critical function 
     $\varphi'(\bsP) = \varphi(\bsP) + \mathbbm{1}_{(P_1, P_2)\in S}$ 
     is uniformly and strictly more powerful than $\varphi$. To prove that $\varphi'$ is 
     an $\alpha$-level test, we note that $S \cap \{p_{(2)}\leq \alpha\} = \emptyset$.
     Therefore $\e(\varphi'(\bsP)\mid P_1 = p_0) \leq \alpha$  holds for any $p_0\in[0,1]$.
     Similarly, $\e(\varphi'(\bsP)\mid P_2 = p_0)\leq \alpha$. Since $p_0$ is arbitrary we
    conclude that $\varphi'$ is an $\alpha$-level test. 
    Actually, as shown in \Cref{figure:counter_example}, 
    we can further expand the rejection region of $\varphi'$ to 
    include also the dotted shaded regions and to get an even more powerful but 
    still valid test   
    $\widetilde\varphi$. The rejection region of $\widetilde\varphi$ 
    in the p-values space consists of small squares along the diagonal line. 
    
     If the test statistics are $Z_1 \sim \dnorm(\mu_1, 1)$ and 
     $Z_2 \sim \dnorm(\mu_2, 1)$, and $H_{1}$ and $H_2$ are two-sided tests for the mean
     $\mu_1$ and $\mu_2$ respectively, then the top two plots of 
     \Cref{figure:counter_example} 
     show the rejection region of $\varphi'$ and $\widetilde\varphi$ at level 
     $\alpha = 0.1$ 
     in the p-value space and in the test statistic space. 
     The bottom two plots compare the power of $\varphi$ and $\widetilde \varphi$ 
     as a function of $(\mu_1, \mu_2)$. They show that the power gain of 
     the non-monotone $\widetilde\varphi$
     only appears in the low power region where the power is below or near $\alpha$.

     \begin{figure}[ht]
\includegraphics[width=\textwidth]{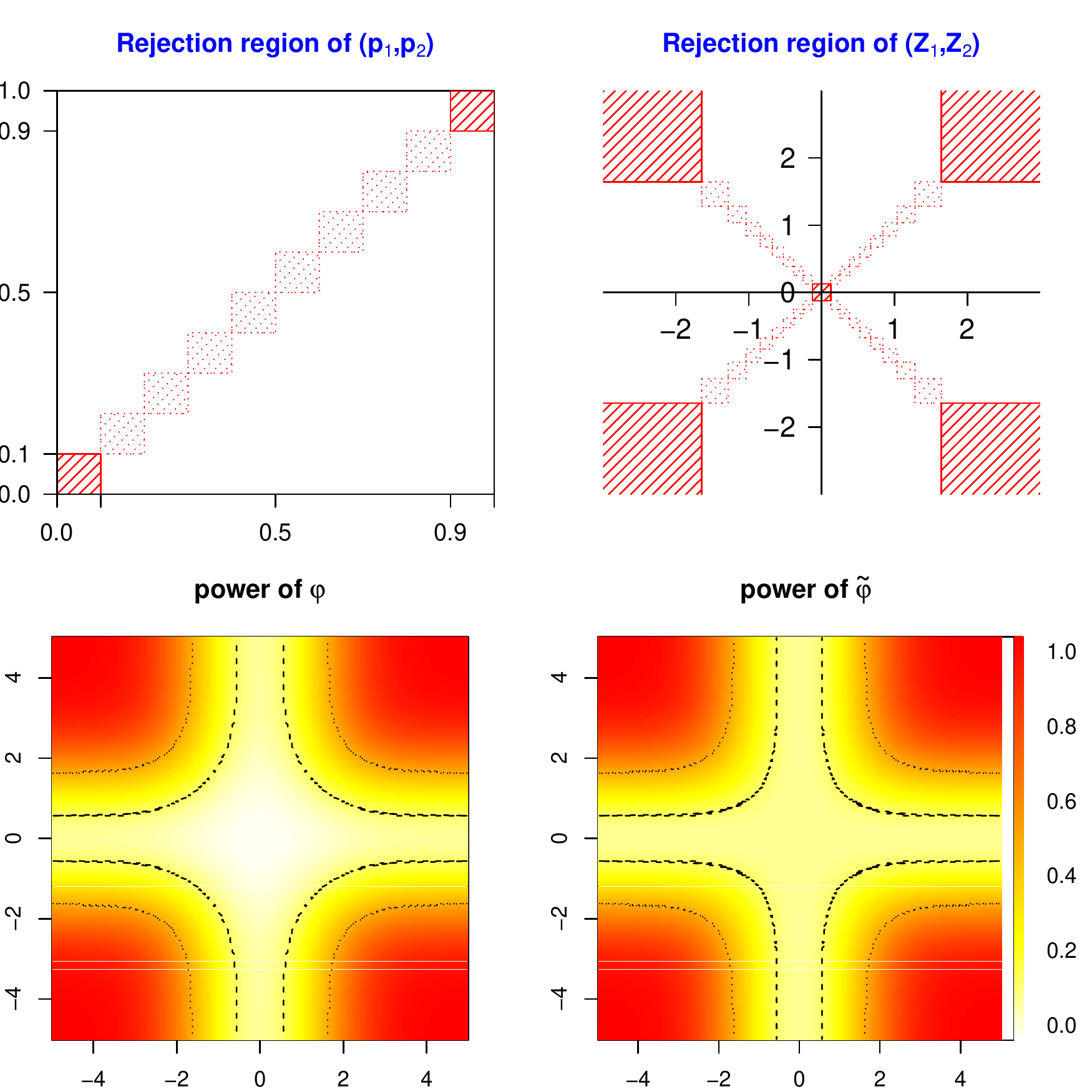}
\caption{The top two plots: rejection regions of $\varphi'$ and 
$\widetilde\varphi$ in the p-value space and the test statistic space, using $\alpha = 0.2$. The sold shaded region is the rejection region of $\varphi'$, while 
the rejection region of $\widetilde \varphi$ also includes in the dotted shaded squares. The Bottom two plots: power comparison of $\varphi$ and $\widetilde \varphi$. The 
dashed line is where power is at $0.15$ and the dotted line is where the 
power is $0.5$.}
\label{figure:counter_example}
\end{figure}

The more powerful test $\varphi'$
increases power by strangely rejecting $H^{2/2}_0$ when both input p-values are large enough.
We now use this same approach to
show that without the monotonicity constraint, 
any GBHPC p-value is inadmissible 
for any $n$ and any $r\in 2{:}n$. 
The counter-examples reject $H^{r/n}_0$ when all p-values are large.
The idea is to show that for any GBHPC test, it's always possible to add 
a ``box''-shaped rejection region like the square around the origin 
in the right panel of 
\Cref{figure:counter_example} while still keeping the test valid. 
The point is not to advocate for such tests, but rather to reinforce the idea
that admissibility is only a useful concept within a well chosen class of functions.

We need the following mild technical constraint to guarantee that the 
``box'' we choose can really increase power at least in one alternative hypothesis.
\begin{assumption}\label{assumption:dominate}
  For each $i \in 1{:}n$, there exists 
  $\theta_i^0 \in \Theta_{0i}$ that 
  $\Pr_{\theta_i^0}(P_i \leq \alpha) = \sup_{\theta_i \in \Theta_{0i}}(P_i \leq \alpha)$ 
  for $\forall \alpha \in [0, 1]$. 
  Let $\theta^0 = (\theta_1^0, \theta_2^0, \cdots, \theta_n^0)$.  Then 
  for any set $A$, if $\Pr_{\theta^0}
    (A) > 0$, then there exists $ \theta^1 \in \Theta_1^{r /
    n}$  that
    $      \Pr_{\theta^1} (A) > 0 $.
\end{assumption}

\begin{thm}
Let $P_1,\dots,P_n$ be independent p-values satisfying
\cref{assumption:power,assumption:continuity,assumption:dominate}.
Let $1<r\le n$ and $\alpha \in (0, 1)$.  Then any monotone $\alpha$-admissible
combined p-value for testing $H^{r / n}_0$ is not $\alpha$-admissible
without the monotonicity constraint.
\end{thm}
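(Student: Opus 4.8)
The plan is to put the candidate p-value into canonical form, then enlarge its rejection region by a product ``box'' sitting near the all-ones corner of the p-value cube, where the GBHPC statistic is safely above $\alpha$. First I would invoke \Cref{thm:necessity,thm:sufficiency} together with \Cref{prop:unique}: any monotone $\alpha$-admissible combined p-value for $H_0^{r/n}$ is a sensitive GBHPC p-value, so without loss of generality the test is $\varphi=\mathbbm{1}_{f^\star(\bsP)\le\alpha}$ with $f^\star(\bsp)=\max_{|u|=n-r+1}g_u(\bsp_u)$, each $g_u$ a valid monotone meta-analysis p-value for $H_{0u}$. The target is to produce a valid level-$\alpha$ test $\varphi'$ with $\e_\theta(\varphi')\ge\e_\theta(\varphi)$ for all $\theta\in\Theta_1^{r/n}$ and strict inequality somewhere.

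For the construction, fix thresholds $c_1,\dots,c_n$ close to $1$, set $B=\prod_{i=1}^n[c_i,1]$, and define $\varphi'=\varphi+\mathbbm{1}_{\bsP\in B}$. Since each $g_v$ is a valid p-value, monotonicity and validity force $g_v>\alpha$ on a neighborhood of the corner $(1,\dots,1)$; hence for $c$ near $1$ we have $f^\star>\alpha$ throughout $B$, so $B$ is disjoint from the rejection region $\{f^\star\le\alpha\}$, $\varphi'$ is $\{0,1\}$-valued, and $\varphi'\ge\varphi$ pointwise. Validity is the heart of the matter. Fix a null $\theta\in\Theta_0^{r/n}$ and a fully null index set $u$ with $|u|=n-r+1$ (one exists because at most $r-1$ coordinates are non-null), and condition on the $r-1$ coordinates $\bsP_{-u}$, integrating over the null coordinates $\bsP_u$. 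Because $g_u$ depends only on $\bsp_u$ and all of $u$ is null, $\Pr_{\theta_u}(g_u(\bsP_u)\le\alpha)\le\alpha$, and $\{f^\star\le\alpha\}\subseteq\{g_u\le\alpha\}$, so each conditional slice of the original region has mass at most $\alpha$. If $p_j<c_j$ for some $j\in -u$ the box slice is empty and nothing is added. If instead $p_j\ge c_j$ for all $j\in -u$, the box slice $\{\bsp_u : p_i\ge c_i,\ i\in u\}$ lies inside $\{g_u>\alpha\}$, hence is disjoint from the original slice; it then suffices to show the original slice has conditional mass strictly below $\alpha$ by a margin exceeding the box mass $\prod_{i\in u}(1-c_i)$.

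The main obstacle is precisely this uniform slack bound, and it is where the hypothesis $r\ge2$ is essential. Since $r\ge2$ we may pick $j_0\in -u$ and $i_0\in u$ and form the swapped subset $v_1=(u\setminus\{i_0\})\cup\{j_0\}$ of size $n-r+1$ with $v_1\ne u$. On the box-active slices the coordinate $p_{j_0}$ is pinned near $1$ and so contributes essentially no evidence to $g_{v_1}$; I would argue that the additional constraint $g_{v_1}\le\alpha$, required for $f^\star\le\alpha$, excludes a set of positive $\bsP_u$-measure $\eta$ from $\{g_u\le\alpha\}$, and crucially that $\eta$ stays bounded away from $0$ as $c\to1$ (it depends on $g_u,g_{v_1}$ and the distribution of $\bsP_u$, not on how close $c$ is to $1$). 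Choosing $c$ so that $\prod_{i\in u}(1-c_i)\le\eta$ then yields conditional mass $\le\alpha$ on every slice, hence $\e_\theta(\varphi')\le\alpha$ for all null $\theta$. The delicate quantitative step is establishing this $c$-free lower bound $\eta$ on the deficit; a secondary technical point is that the conditional computation treats the null coordinates as (at worst) uniform, which under the stated model is supplied by the least-favorable null of \Cref{assumption:dominate} together with \Cref{assumption:continuity}.

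Finally, for the power comparison, $\varphi'\ge\varphi$ gives $\e_\theta(\varphi')\ge\e_\theta(\varphi)$ at every $\theta$, so no power is lost on $\Theta_1^{r/n}$. At the least-favorable null $\theta^0$ of \Cref{assumption:dominate}, independence gives $\Pr_{\theta^0}(B)=\prod_i\Pr_{\theta_i^0}(P_i\ge c_i)>0$; \Cref{assumption:dominate} then furnishes $\theta^1\in\Theta_1^{r/n}$ with $\Pr_{\theta^1}(B)>0$, whence $\e_{\theta^1}(\varphi')=\e_{\theta^1}(\varphi)+\Pr_{\theta^1}(B)>\e_{\theta^1}(\varphi)$. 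Thus $\varphi'$ is a valid level-$\alpha$ test that strictly dominates $\varphi$ somewhere on $\Theta_1^{r/n}$, so $\varphi$ fails to be $\alpha$-admissible once the monotonicity restriction is removed.
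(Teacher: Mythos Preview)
Your overall architecture matches the paper's proof: reduce to a GBHPC test via \Cref{thm:necessity}, adjoin a product box near the all-ones corner, verify validity by conditioning on the $r-1$ coordinates in $-u$, and obtain strict improvement via \Cref{assumption:dominate}. The divergence is entirely in how you manufacture the slack so that adding the box does not push the conditional rejection probability above $\alpha$.

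Your swap device---picking one auxiliary subset $v_1=(u\setminus\{i_0\})\cup\{j_0\}$ and arguing that the extra constraint $g_{v_1}\le\alpha$ removes a set of $\bsP_u$-measure at least $\eta>0$ from $\{g_u\le\alpha\}$---actually needs $g_u$ to be sensitive in the coordinate $p_{i_0}$. Without that, $\{g_u\le\alpha\}$ need not extend along the $p_{i_0}$-axis and can sit entirely inside the cylinder $\{g_{v_1}(\,\cdot\,,p_{j_0})\le\alpha\}\times[0,1]_{p_{i_0}}$, giving $\eta=0$. You try to secure sensitivity by asserting that any monotone $\alpha$-admissible p-value is a \emph{sensitive} GBHPC p-value, citing \Cref{thm:necessity}, \Cref{thm:sufficiency} and \Cref{prop:unique}; but those results do not deliver this. \Cref{thm:necessity} only produces a dominating GBHPC, with no control on sensitivity, and \Cref{thm:sufficiency} is a sufficiency statement in the other direction. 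So the step you yourself flag as ``delicate'' currently rests on an unproved hypothesis.

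The paper avoids this entirely with a global argument at the least-favorable null $\theta^0$ of \Cref{assumption:dominate}: if $\Pr_{\theta^0}(R_u\setminus R)=0$ for \emph{every} $u$ (where $R=\{f^\star\le\alpha\}$ and $R_u=\{g_u\le\alpha\}$), then $\mathbbm 1_{f^\star\le\alpha}$ is a.e.\ independent of each $\bsp_{-u}$, hence of every coordinate, hence constant---contradicting validity or admissibility. Therefore some $u^\star$ gives $\Pr_{\theta^0}(R)\le\Pr_{\theta^0}(R_{u^\star})-\epsilon\le\alpha-\epsilon$ unconditionally. Monotonicity of $f^\star$ then pushes this uniform $\epsilon$ of slack to the conditional slices $\Pr_{\theta_u^0}(f^\star\le\alpha\mid\bsP_{-u}=\bsp_{-u})$ for $\bsp_{-u}$ near the corner, simultaneously for all $u$. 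This yields the box size without any sensitivity assumption and without a subset-by-subset analysis.
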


\begin{proof}
  Using \Cref{thm:necessity}, we only need to consider a GBHPC p-value 
  $f^\star$ which is defined in
  \Cref{def:GBHPC}. Let $\theta^0 = (\theta_1^0, \theta_2^0, \cdots, \theta_n^0)$ be the 
  parameter in \Cref{assumption:dominate}.   
define  
$$R = \{\bsp \in [0, 1]^n:f^\star(\bsp)\leq \alpha\} 
= \bigcap_{u\subset1{:}n\atop |u|=n-r+1} R_u$$
  where $R_u = \{\bsp \in [0, 1]^n:g_{u}(\bsp_u) \leq \alpha\}$ 
  and $g_{u}$ is defined in~\eqref{eq:defgbhpc}.

  First, as $\Pr_{\theta^0}(f^\star \leq \alpha) \leq \alpha < 1$ 
  and $f^\star$ is non-decreasing,  
  there exists some $p_0 < 1$ such that if $p_{j} \geq p_0$ for all $j \in 1{:}n$ then
$ f^{\star} (\bsp) > \alpha$. 

  Then, we show that there must exist a set $u^\star$ with
  $\Pr_{\theta^0}(R_{u^\star} \cap R^c) = \epsilon > 0$, where $R^c$ is the complement set of $R$. 
  If this doesn't hold, then it means that 
  for any $u \subset 1{:}n$ with $|u| = n-r+1$, the equation 
  $1_{f^\star(\bsp) \leq \alpha}(\bsp) = 1_{g_u(\bsp_u)\leq \alpha}(\bsp)~a.e.~\Pr_{\theta^0}$
  holds. This implies that $1_{f^\star \leq \alpha}$ doesn't depend on $\bsp_{-u}$ 
  except for a zero probability set under $\Pr_{\theta^0}$. As 
  $\cup_{u\subset1{:}n\atop |u|=n-r+1} {-}u = 1{:}n$, we get that  
  $1_{f^\star \leq \alpha}$ doesn't depend on any $p_j$ except for a zero probability set 
  under $\Pr_{\theta^0}$, 
  which implies that $1_{f^\star \leq \alpha} \equiv 1$ or $0~ a.e.~\Pr_{\theta^0}$. 
  It's obvious that 
  such a test is either invalid or trivially not admissible, which contradicts our 
  assumptions.



  As a consequence, 
  we have 
  $\Pr_{\theta^0}(f^\star\leq \alpha) = \Pr_{\theta_0}(R_{u^\star}) - \epsilon
  \leq \alpha - \epsilon$.
   Notice that $\Pr_{\theta^0}(f^\star\leq \alpha) 
  = \mathbb{E}_{\theta^0_{-u}}\big(\Pr_{\theta^0_u}
  [f^\star\leq \alpha\mid \bsP_{-u}]  \big)$ 
  for any $u$. 
 using the fact that $f^{\star}$ is non-decreasing, 
 $\Pr_{\theta^0_{u}} [ f^{\star} \leq \alpha \mid \bsP_{-u} = \bsp_{-u}
 ]$ is non-increasing in $\bsp_{-u}$. Thus 
 there exists  $\wt p < 1$, such that for any $u$, if $\bsp_{-u} \in [\wt p, 1]^{r - 1}$, then
\[ \Pr_{\theta^0_{u}} [ f^{\star} \leq \alpha \mid \bsP_{-u} = \bsp_{-u}
] \leq \alpha - \epsilon. \]
  Let $p^{\star} = \max (p_0, \wt p, 1 - \epsilon^{1 / (n - r + 1)})$ and 
  $S = \cap_i \{\bsp \in [0, 1]^n: p_i \geq p^\star\}$.
  Then we construct a new test with critical function $\varphi$:
  $\varphi = \mathbbm{1}_{f^{\star} \leq \alpha} + \mathbbm{1}_S$.
 
  As $\{\bsp\in[0, 1]^n: f^{\star}(\bsp) \leq \alpha\} \cap S = \emptyset$, we know  
  that $\varphi$  is at least as powerful as $\mathbbm{1}_{f^{\star} \leq \alpha}$.
  Using \Cref{assumption:dominate}, as $\Pr_{\theta^0}(S) \geq (1 - p^{\star})^n > 0$, 
  there exists $ \theta^1 \in \Theta_1^{r/n}$ with $\Pr_{\theta^1}(S) > 0$. Thus, 
  $\varphi$ strictly dominates $\mathbbm{1}_{f^{\star} \leq \alpha}$ at $\theta^1$.
    Finally, for $\forall \bsp \in [0, 1]^n$ and $\forall u\subset 1{:}n$ with $|u| = n-r+1$,
    if $\theta_u \in H^{1/{n -r + 1}}$, then
    \begin{align*}
    \mathbbm{E}_{\theta_{u}} [ \varphi \mid \bsP_{-u} = \bsp_{-u} ]
    &\leq \Pr_{\theta_{u}} [ f^{\star}\leq \alpha \mid \bsP_{-u} = \bsp_{-u} ] 
    + \epsilon \mathbbm{1}_{\bsp_{-u} \in [p^{\star}, 1]^{r-1}}\\
    & \leq \Pr_{\theta_{u}^0} [ f^{\star}\leq \alpha \mid \bsP_{-u} = \bsp_{-u} ] 
    + \epsilon \mathbbm{1}_{\bsp_{-u} \in [p^{\star}, 1]^{r-1}}
    \leq \alpha.
  \end{align*}
  The second inequality above follows from \Cref{assumption:dominate}, independence of 
  the individual p-values and monotonicity of $f^\star$. 
  Thus $\varphi$ is still an $\alpha$-level test for $H^{r/n}$. This shows that 
  $f^{\star}$ is not
  $\alpha$-admissible.
\end{proof}

\section{Simulation}\label{sec:simulation}
In this simulation example, we compare the power of several GBHPC p-values testing 
for the PC hypothesis $H_0^{r/n}$ with $n = 8$ studies and $r = 2$. Compared with other 
$r$ values, the null hypothesis $H_{0}^{2/n}$ is often of 
particular interest as it tests whether the significance of the effect can 
replicate or not across studies. It is also the case where the computational cost of 
non-symmetric GBHPC p-value would typically not be a concern.

We consider the alternative whose true number of 
non-null hypotheses is one of $r_0 = 2, 4, 6$. We assume that all the individual 
hypotheses are independent. Each p-value $P_i$ is 
a two-sided p-value of the corresponding z-value $Z_i \sim \dnorm(\sqrt{N_i}\mu_i, 1)$ 
for $i = 1, 2, \dots, 8$. 
We set three of the sample sizes $N_i$ of the eight individual studies to $100$, another three of them to $500$ and the last two to $1000$.  
for the effects $\mu_i$, if $H_{0i}$ is true, then $\mu_i = 0$. 
Otherwise when $H_{0i}$ is false, we generate 
$\mu_i \overset{i.i.d}{\sim} \text{Gamma}(\alpha_0, \beta_0)$. We define 
$\mu_0 = \alpha_0/\beta_0$ and $\sigma_0 = \sqrt{\alpha_0/\beta_0^2}$ which are the mean
and standard deviation of the non-null effect across studies. 
We compare the power of each GBHPC p-value as a function of $(\mu_0, \sigma_0)$ 
at the significance level of $\alpha = 0.05$.

We compare three GBHPC P-values, whose meta-analysis P-values $g_u$ are from 
\cref{example:simes,example:fisher,exmp:wtstouf} respectively. 
The results are shown in \Cref{figure:simulation}. 
For each $r_0$ and each form of the GBHPC p-value, 
we plot a power map against $(\mu_0, \sigma_0)$. 
To better illustrate the difference across methods, for Simes and weighted Stouffer 
GBHPC p-values, we plot their powers after they are substracted by the powers of Fisher's BHPC p-value at the corresponding location. Here are some 
observations from the simulation results. First, \Cref{figure:simulation} shows that when $r_0 = 2$, Simes BHPC p-value 
is the most powerful in a large region of the alternative space. The 
reason is that, for each subset of hypotheses with size $n -r + 1 = n - 1$, 
at the worst case there is only one non-null individual hypothesis, where 
simes should be most powerful in detecting extreme p-values. 
Second, the Weighted Stouffer 
GBHPC p-value can have higher power than Fisher's BHPC p-value when $r_0 > r = 2$ 
and when the effect heterogeneity across non-null studies ($\sigma_0$) is not too large 
 to dominate the average effect ($\mu_0$).  
Notice that we are using two-sided p-values to make a fair comparison of the methods, 
thus taking $\sqrt{n_i}$ as 
weights in Stouffer's method 
would not be optimal. However, as we have discussed in \Cref{exmp:wtstouf} and shown 
here, it can still provide a 
good test with two-sided p-value. 
If individual p-values were one-sided, then we would have seen a even higher power gain.
Finally, 
the three methods would not have a noticeable difference in power when the power is 
too low or too high. Most of the difference apprear when the power is in the range of 
$0.4$ to $0.6$.

\begin{figure}[ht]
\includegraphics[width=\textwidth]{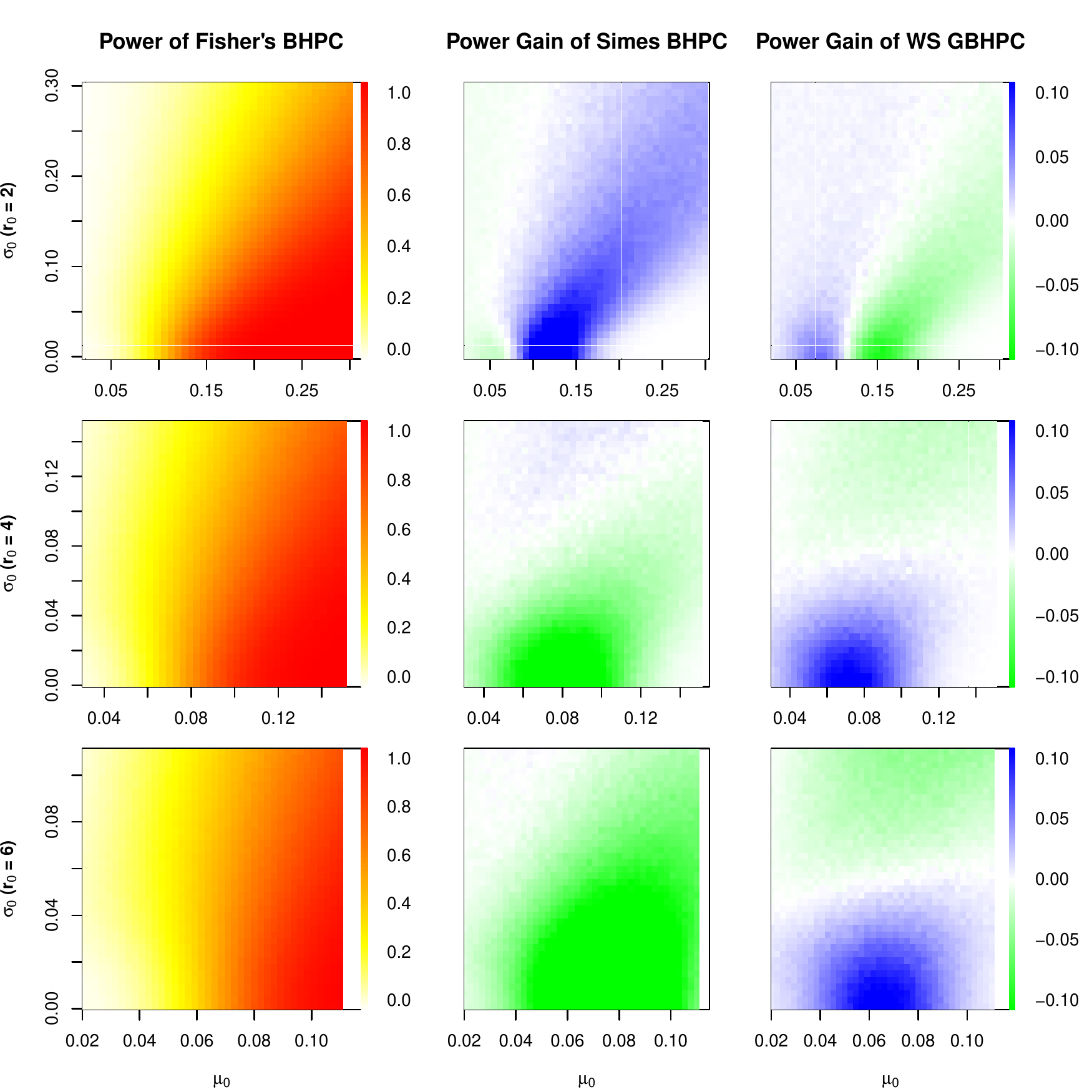}
\caption{Power comparison of GBHPC p-values: 
Each row is for one $r_0$ value and each column is for one form of GBHPC p-value. 
the significance level is $\alpha = 0.05$. 
The first column are power maps Fisher's BHPC p-value against $(\mu_0, \sigma_0)$. 
the last two columns are the power difference of Simes and Weighted Stouffer's GBHPC 
p-values against Fisher's p-value. The green color indicates a power loss compared 
with Fisher's BHPC p-value while the blue color shows a power gain.}
\label{figure:simulation}
\end{figure}

\section{Real data analysis}\label{sec:real}

Compared with BHPC p-values, GBHPC p-values has the flexiblility to 
make use of the possibly complex dependency structure across studies, thus 
may achieve higher power. We use a real data example to illustrate this benefit.

The dataset \citep{ruff2014} is a pooled dataset from four randomized 
clinical trials aiming to measure the relative benefit of new anticoagulants (NOAC) 
compared with an old drug called warfarin for stroke prevention. One primary goal in 
the original paper was to assess and compare the efficiency of these 
new drugs in different clinical subgroups of patients. The subgroups and the data  
are shown in \Cref{table:1}. The data ($m/N$) records that 
among $N$ samples the number of samples who suffer a stroke or systemic embolic event 
is $m$. 
Thus, to test the different between using the old and new drug, we 
estimate the odds ratio and compute p-values for each subgroup using Fisher's exact test. 

We can use PC tests to assess the consistency of the drug efficiency 
across different subgroups, which is one of the major interest of the original paper. 
One major difficulty for applying PC tests (and meta-analysis) to theses subgroups 
is that these subgroups are correlated. For example, the subgroup of age $\leq 75$ 
clearly overlaps with the subgroup of female. When there is unknown dependency across 
individual hypotheses, the only BHPC p-values available are to have the 
meta-analysis p-value $g$ be either the Bonferroni or Simes p-value. 
The left column of \Cref{table:2} are the values of the Bonferroni BHPC p-values 
$p_{r/n}^{\text{Bon}}$
for $H_{0}^{r/n}$ with $n = 18$ and $r$ changing from $2$ to $18$. 
As $p_{r/n}^{\text{Bon}}$ is increasing in $r$ except when $r$ change from $16$ to 
$17$, the values of Simes BHPC p-values will agree with $p_{r/n}^{\text{Bon}}$ 
except when $r = 16$. 

The question is: is it possible to get smaller valid p-values? The answer is YES, 
by using a non-symmetric GBHPC p-value. Notice that the groups which 
represent different levels of the same grouping factor do not share samples, thus have 
independent p-values. For example, the three p-values for the three $\text{CHADS}_2$ 
groups are independent. 
Let $1{:}18 = \cup_{i = 1}^8 I_{i}$ where each $I_{i}$ is the index set of subgroups 
using the $i$th grouping factor. 
Then, to build a GBHPC p-value $p_{r/n}^{\text{new}}$, 
we construct $g_u(p_u)$ for each $u\in 1{:}n$ with $|u| = n - r +1$ 
as follows: for each $I_i\cap u \neq \emptyset$, a Fisher's p-value (\Cref{example:fisher}) 
$p_{u, i}$ is calculated on $p_{u \cap I_i}$, then 
$$g_u(p_u) = \big|\{i: u \cap I_i \neq \emptyset\}\big|\cdot\left(\min_i p_{u, i}\right)$$
which is a Bonferroni combination of p-values 
across the grouping factors. The above construction 
obviously provides a valid GBHPC p-value.

The next concern is the 
computation of $p_{r/n}^{\text{new}}$. We claim that for any given 
$r$, it can be quickly computed by checking \Cref{table:3}. 
\Cref{table:3} are all $p_{u, i}$ values 
that can possibly affect the value of $p_{r/n}^{\text{new}}$ for any $r$. 
Notice that 
$$f^\star(p) = \max_{u, |u| = n - r + 1}\min_i p_{u, i}.$$
Since $p_{u, i}$ is symmetric, it can possibly influence $f^\star$ only when 
$p_{u, i}$ is the Fisher's combination on the largest $|\mu \cap I_i|$ p-values in 
$I_i$. This explains the values in \Cref{table:3}. Then, computing 
$p_{r/n}^{\text{new}}$ becomes easy. One nice phenomenon of the p-values in 
\Cref{table:3} is that p-values on the first row are uniformly smaller 
than the p-values on the second row, the latter further uniformly smaller than 
p-values on the third row.
Such a property can 
greatly simplify the computation. The calculation of $g_u(p_u)$ 
is by replacing some $p_{I_i, i}$ (first row p-value in \Cref{table:3}) with 
some larger $p_{u, i}$ (higher row p-values) or completely removing it. 
For example when $r = 4$, there are $n -r + 1 = 3$ indices that are not in $u$. 
Thus for any $u$, at most $3$ of the $p_{I_i, i}$ can be replaced. If we 
denote $p_{I_{(1)}, (1)} \leq \dots p_{I_{(8)}, (8)}$, then it's easy to see that 
$p_{4/n}^{\text{new}} = 8p_{I_{(4)}, (4)}$. 

It is a bit more complicated when $r \geq 9$. First, notice that for the 
$r - 1$ indices that are not in $\mathrm{argmax}_u g_u(p_u)$, we should 
have each $I_i \cap -u \neq \emptyset$ to avoid appearance of p-values on the first 
row in \Cref{table:3}. 
Then the problem becomes optimizing the location of the rest $r - 1- 8$ indices 
to maximize $g_u(p_u)$. For example when $r = 12$, there are $4$ indices left and 
we now just need to examine the p-values on the second and third row in \Cref{table:3}. If we denote the p-values on the second row as $p_{u_i, i}$ and 
define $p_{u_{(1)}, (1)} \leq \dots p_{u_{(8)}, (8)}$, using a similar 
argument as $r = 4$, one can check that 
$p_{12/n}^{\text{new}} = 
\max\{7p_{u_{(2)}, (2)}, 6p_{u_{(3)}, (3)}, 5p_{u_{(4)}, (4)}\}$. 

Finally,  the $p_{r/n}^{\text{new}}$ values for all $r = 2, \dots, 18$ are shown 
in \Cref{table:2}. Compared with Bonferroni BHPC p-values, the new GBHPC p-values 
can be much smaller especially when $r$ is small. Both methods give a $95\%$ confidence interval of the true proportion of non-null hypotheses $r_0/n$ as $ r_0/n \in [0.72, 1]$.

\begin{table}[ht]
\begin{subtable}{.6\textwidth}
\caption{}
\label{table:1}
\centering
\scalebox{0.6}{
\begin{tabular}{@{}lrrrr@{}}
  \hline
   & \multirow{2}{2.6cm}{Pooled NOAC (events)}& 
   \multirow{2}{3cm}{Pooled Warfarin (events)} & 
   \multirow{2}{2.1cm}{Estimated Odds Ratio} & 
   \multirow{2}{1.3cm}{p value} \\ 
   &&&&\\
  \hline
 \multicolumn{3}{@{}l}{\textbf{Age(years)}}\\ 
$\leq 75$ & 496/18073 & 578/18004 & 0.85 & 9.26E-03 \\ 
  $\geq 75$ & 415/11188 & 532/11095 & 0.76 & 6.61E-05 \\ 
   \multicolumn{3}{@{}l}{\textbf{Sex}}\\ 
  Female & 382/10941 & 478/10839 & 0.78 & 5.00E-04 \\ 
  Male & 531/18371 & 634/18390 & 0.83 & 2.38E-03 \\ 
   \multicolumn{3}{@{}l}{\textbf{Diabetes}}\\ 
  No & 622/20216 & 755/20238 & 0.82 & 2.93E-04 \\ 
  Yes & 287/9096 & 356/8990 & 0.79 & 3.81E-03 \\ 
   \multicolumn{3}{@{}l}{\textbf{Previous stroke or TIA}}\\ 
  No & 483/20699 & 615/20637 & 0.78 & 4.65E-05 \\ 
  Yes & 428/8663 & 495/8635 & 0.85 & 2.14E-02 \\ 
 \multicolumn{3}{@{}l}{\textbf{Creatinine clearance (mL/min)}}\\ 
  $\leq 50$ & 249/5539 & 311/5503 & 0.79 & 6.24E-03 \\ 
  $50-80$ & 405/13055 & 546/13155 & 0.74 & 5.85E-06 \\ 
  $\geq 80$ & 256/10626 & 255/10533 & 1.00 & {\color{blue}9.64E-01} \\ 
  \multicolumn{3}{@{}l}{\textbf{$\text{CHADS}_2$ score}}\\ 
 $0-1$ & 69/5058 & 90/4942 & 0.75 & {\color{blue}7.83E-02} \\ 
  $2$ & 247/9563 & 290/9757 & 0.87 & {\color{blue}1.05E-01} \\ 
  $3-6$ & 596/14690 & 733/14528 & 0.80 & 5.21E-05 \\ 
  \multicolumn{3}{@{}l}{\textbf{VKA status}}\\  
  Naive & 386/13789 & 513/13834 & 0.75 & 2.19E-05 \\ 
  Experienced & 522/15514 & 597/15395 & 0.86 & 1.61E-02 \\ 
   \multicolumn{3}{@{}l}{\textbf{Centre-based TTR}}\\ 
  $\leq 66$ & 509/16219 & 653/16297 & 0.78 & 2.49E-05 \\ 
  $\geq 66$ & 313/12742 & 392/12904 & 0.80 & 4.68E-03 \\ 
   \hline
\end{tabular}
}
\end{subtable}
\begin{subtable}{0.4\textwidth}
\caption{}
\label{table:2}
\centering
\scalebox{0.6} {
\begin{tabular}{@{}lrr@{}}
  \hline
r & $p_{r/n}^\text{Bon}$ & $p_{r/n}^{\text{new}}$ \\ 
  \hline
2 & 3.73E-04 & \textbf{4.49E-05} \\ 
  3 & 3.98E-04 & \textbf{4.66E-05} \\ 
  4 & 6.98E-04 & \textbf{7.50E-05} \\ 
  5 & 7.29E-04 & \textbf{1.18E-04} \\ 
  6 & 8.59E-04 & \textbf{1.31E-04} \\ 
  7 & 3.52E-03 & \textbf{1.39E-04} \\ 
  8 & 5.50E-03 & \textbf{4.23E-04} \\ 
  9 & 2.38E-02 & \textbf{1.90E-02} \\ 
  10 & 3.43E-02 & \textbf{2.66E-02} \\ 
  11 & 3.75E-02 & \textbf{2.81E-02} \\ 
  12 & 4.37E-02 & \textbf{4.63E-02} \\ 
  13 & \textbf{5.56E-02} & 6.45E-02 \\ 
  14 & 8.07E-02 & \textbf{6.45E-02} \\ 
  15 & 8.56E-02 & \textbf{7.36E-02} \\ 
  16 & 2.35E-01 & \textbf{7.36E-02} \\ 
  17 & 2.11E-01 & 2.11E-01 \\ 
  18 & 9.64E-01 & 9.64E-01 \\ 
   \hline
\end{tabular}
}
\end{subtable}
\begin{subtable}{\textwidth}
\caption{}
\label{table:3}
\centering
\scalebox{0.6} {
\begin{tabular}{@{}lrrrrrrrr@{}}
  \hline
 & \textbf{Age} & \textbf{Sex} & \textbf{Diabetes} & 
 \textbf{Stroke or TIA} & \textbf{Creatinine} & \textbf{$\text{CHADS}_2$} & 
 \textbf{VKA} & \textbf{TTR} \\ 
\hline
 $H_0^{1/k_i}$& 9.37E-06 & 1.74E-05 & 1.64E-05 & 1.47E-05 & 5.83E-06 & 5.29E-05 & 5.61E-06 & 1.98E-06 \\ 
 $H_0^{2/k_i}$& 9.26E-03 & 2.38E-03 & 3.81E-03 & 2.14E-02 & 3.68E-02 & 4.78E-02 & 1.61E-02 & 4.68E-03 \\ 
 $H_0^{3/k_i}$& -- & -- & -- & -- &  9.64E-01 & 1.05E-01 &-- & -- \\ 
   \hline
\end{tabular}
}
\end{subtable}
\caption{(a) The original data and individual p-values: the blue color highlights 
individual p-values that are not significant at level $\alpha = 0.05$; (b) 
the values of Bonferroni BHPC p-value and the new GBHPC p-values when $r$ changes 
in $H_{0}^{r/n}$; (c) the grouping factor level combined p-values: here 
$k_i = |I_i|$ and each p-value is the Fisher's BHPC p-value on $p_{I_i}$ testing 
for $H_{0}^{r_1/k_i}$ where $r_1 = 1, 2, 3$.} 
\label{table:123}
\end{table}

\section{Conclusion and future work}\label{sec:conclusion}
Partial conjunction hypotheses are natural hypotheses to test for 
measuring repeated effects across settings/studies. The null is rejected 
only when at least $r$ hypotheses are non-null. By testing PC hypotheses at 
different $r$ values, one can also construct a confidence interval of $r_0/n$, 
the true proportion of non-null hypotheses.

This paper characterizes the admissible p-values for a partial conjunction test
of independent hypotheses or hypotheses with positively dependent P-values, 
within the class of non-decreasing 
p-values. Any monotone admissible p-value for
$H_0^{r/n}$ is the maximum of the non-decreasing p-values for the global null in each combination 
of $n-r+1$ hypotheses, which we call GBHPC p-values.
We have shown that for sensitive GBHPC p-values, as long as each meta-analysis 
p-value of the $n-r+1$ hypotheses is admissible,
the combined p-value is monotone admissible. 
A consequence is that among
combined p-values that only depend on the order statistics of individual p-values, 
the original BHPC p-values 
are the only monotone admissible ones. 
We also have found inadmissibility of GBHPC p-values 
without the monotonicity constraint. However, 
the dominated tests only have a moderate power gain at low power regions 
in the alternative space. 
Since these counter-examples are not monotone, they are 
hard to be explained in practice thus 
not reasonable choices. 

In summary, we illustrated the properties of tests for a PC hypothesis and 
characterized a class of good tests called GBHPC p-values. 
Compared with its symmetric form, the BHPC p-values, GBHPC p-values 
have more flexibility to 
adapt to complicated problem structure, 
thus can have power gain at important regions in the 
alternative space, as we showed in our simulations and real data examples. 
The computational cost of non-symmetric GBHPC p-values can be of a concern, 
but there are special cases where GBHPC p-values are computable. 
One of the future directions is to expand the applications where computable 
GBHPC p-values are available.

One other direction is to understand properties of 
the confidence interval of $r$ constructed 
by GBHPC p-values. In \Cref{sec:real}, the result showed that though the newly proposed 
GBHPC p-values can be much smaller than Simes or Bonferroni BHPC p-values 
at many $r$ values, the confidence interval that the two methods constructed are 
still the same. 
Finally, there are variations of partial conjunctions that are useful in practice. 
For example, the count of replicability may vary for different hypotheses. Replication 
of effects from two distinct classes can be of more interest than replication in two 
similar classes. Another variation is to require that 
a null hypothesis is rejected only when there are at least $r$ non-nulls with 
the same sign of effect. Such hypotheses can have very complex alternative and null 
space, and it can be the future work to understand their properties.



\bibliographystyle{chicago}
\bibliography{ref}

\end{document}